\algrenewcommand\algorithmicindent{1.0em}
\newcolumntype{L}[1]{>{\raggedright\let\newline\\\arraybackslash\hspace{0pt}}m{#1}}
\newcolumntype{C}[1]{>{\centering\let\newline\\\arraybackslash\hspace{0pt}}m{#1}}
\newcolumntype{R}[1]{>{\raggedleft\let\newline\\\arraybackslash\hspace{0pt}}m{#1}}
\algrenewcommand\algorithmicindent{1.0em}
\DeclarePairedDelimiter\floor{\lfloor}{\rfloor}
\newtheorem{thm}{Theorem}[section]
\newtheorem{cor}[thm]{Corollary}
\theoremstyle{definition}
\newtheorem{defn}[thm]{Definition}
\theoremstyle{remark}
\numberwithin{equation}{section}
\newcommand{\comment}[1]{}
\newcommand{\assign}{:=}
\newcommand{\dsf}{\text{DSF}}
\newcommand{\codsf}{\text{coDSF}}
\newcommand{\rni}[1]{\text{RNI}({#1})}
\newcommand{\ints}{\mathbb{Z}}
\newcommand{\nats}{\mathbb{N}}
\newcommand{\pnats}{\mathbb{N}^{>0}}
\newcommand{\reals}{\mathbb{R}}
\newcommand{\nnreals}{\mathbb{R}^{\ge0}}
\newcommand{\preals}{\mathbb{R}^{>0}}
\newcommand{\pee}{\mathbb{P}}
\newcommand{\abs}[1]{\lvert #1 \rvert}
\newcommand{\lead}[1]{\par\medskip\noindent\textbf{#1.}}
\begin{document}
	
\title{Digit Serial Methods with Applications to Division and Square Root\\ {\large \it (with mechanically checked correctness proofs)}}

\author[1]{Warren~E.~Ferguson~Jr}
\author[2]{Jesse~Bingham}
\author[2]{Levent~Erk\"{o}k}
\author[2]{John~R.~Harrison}
\author[2]{Joe~Leslie-Hurd}
\affil[1]{Intel Corporation, retired.}
\affil[2]{Intel Corporation, Hillsboro, Oregon.}
\renewcommand\Authands{ and }

\maketitle

%
%
%


\begin{abstract}
We present a generic digit serial method (DSM) to compute the digits of a real number $V$.
Bounds on these digits, and on the errors in the associated estimates of $V$ formed from these digits, are derived.
To illustrate our results, we derive such bounds for a parameterized family of high-radix algorithms for division and square root.
These bounds enable a DSM designer to determine, for example, whether a given choice of parameters allows rapid formation and rounding of its approximation to $V$. All our claims are mechanically verified using the HOL-Light theorem prover, and are included in the appendix with commentary.
\\

\noindent \textbf{\textit{Keywords}}
Digit serial method, digit recurrence method, on-the-fly technique, high-radix, division, square root, digit bounds, error bounds, formal verification, HOL Light.
\end{abstract}

\section{Introduction\label{sec:introduction}}

Let $V$ be a real number. 
A digit serial method (DSM) is an algorithm that determines the digits of $V$ serially, starting with the leading digit. 
A DSM begins by initializing an accumulator to zero and, as each digit is determined, that digit is aligned and added to the accumulator.
Successive values of this accumulator form a sequence of estimates of $V$.

The primary contribution of this paper is a generic DSM analysis method for determining bounds on the magnitudes of the digits, as well as bounds on the error associated with the estimates of $V$.
These bounds allow a designer to determine the required bit-width of signals representing these digits and errors, and to determine when the estimates of $V$ can be efficiently formed and rounded by, say, on-the-fly techniques~\cite{ercegovac1987fly, ercegovac1989fly}.

The major results presented here are the Proxy Theorem~\ref{thm:proxy_theorem} and its Corollary~\ref{cor:proxy_max} with illustrations of their application to division and square root algorithms. 
These results have been checked/formalized using the {HOL Light}~\cite{harrison1996hol} theorem prover; a short extract from the formalization is presented in the appendix.

The analysis of low-radix DSM for division and square root is well-understood~\cite{ercegovac1994division}. 
Analyses of specific high-radix DSM for these operations are described in~\cite{briggs199317, ercegovac1994very, lang1995very}. 
An additional contribution of this paper is the application of our generic DSM analysis to a parameterized family of high-radix DSM algorithms for division and square root.

\section{Scaling\label{sec:scaling}}

The DSM considered here assume that $V \in (0,1)$, so the leading digit of $V$ is known to be the first fraction digit. 
For this assumption to be true, it may be necessary to scale the problem. 
Scaling is a three step process: (1) reduce the general problem to simpler problem by scaling, (2) determine the result of the simpler problem, and (3) reconstruct the general result from the result of the simpler problem. 

For completeness, we briefly describe well-known scalings for division and square root of positive normalized finite precision binary floating-point numbers. Here, a positive normalized finite precision binary floating-point number is a real value of the form $s 2^e$ composed of a normalized significand $s = 1 + f/2^k$, an integer exponent $e$, and a fraction $f/2^k$ where $f$ is a non-negative integer less than $2^k$ for some positive integer $k$.

\lead{Scaling for division} Consider the computation of the quotient $Q \equiv (s_x 2^{e_x})/(s_y 2^{e_y})$ where $s_x$ and $s_y$ are normalized finite precision binary significands, and $e_x$ and $e_y$ are integers.
Scaling reduces the computation of $Q$ to the computation of a related quotient $V \in (0,1)$, a DSM is used to compute $V$, and $Q$ is reconstructed from the value of $V$.
One possible scaling uses the reduction
\[
V \equiv X/Y  \quad\text{where}\quad  (X,Y) \equiv (s_x/2, s_y) ,
\]
so $X \in [1/2,1)$, $Y \in [1,2)$, and $V \in (1/4,1)$. 
After the DSM determines $V$, the final result is reconstructed as follows:
\[
Q = V  2^{e_x - e_y + 1} .
\]

\lead{Scaling for square-root} Consider the computation of the square root $R \equiv \sqrt{s_x 2^{e_x}}$ where $s_x$ is a normalized finite precision binary significand and $e_x$ is an integer.
Scaling reduces the computation of $R$ to the computation of a related square root $V \in (0,1)$, a DSM is used to compute $V$, and $R$ is reconstructed from the value of $V$.
One possible scaling uses the reduction 
\[
V \equiv \sqrt{X} \quad\text{where}\quad
X \equiv \begin{cases*}
s_x/4 & \text{even $e_x$} \\
s_x/2 & \text{odd $e_x$}
\end{cases*},
\]
so $X \in [1/4,1)$ and $V \in [1/2,1)$.
After the DSM determines $V$, the final result is reconstructed as follows:
\[
R = V  \begin{cases*}
			2^{(e_x+2)/2} & \text{even $e_x$} \\
			2^{(e_x+1)/2} & \text{odd $e_x$} 
		 \end{cases*} .
\]

For both division and square root, scaling has reduced the original problem to the computation of a value $V \in (0,1)$, combined with
integer additions that determine the associated exponent.

\section{Basic DSM\label{sec:digitserialmethods}}

Consider the following mixed-radix representation of a real number $V$:
\begin{align*}
V &= \frac{1}{\beta_1}\Big( v_1 + \frac{1}{\beta_2}\Big(v_2 + \frac{1}{\beta_3}\Big(v_3 + \cdots\Big)\Big)\Big) \\
&= \frac{v_{1}}{B_{1}} + \frac{v_{2}}{B_{2}} + \frac{v_{3}}{B_{3}} + \cdots 
\end{align*}
where\footnote{Notation: Reals $\reals$, non-negative reals $\nnreals$, positive reals $\preals$, integers $\ints$, natural numbers $\nats = \{0,1,\ldots\}$ , counting numbers $\pnats = \{1,2,\ldots\}$.} $\forall i \in \pnats: B_i \equiv \beta_1 \beta_2 \ldots \beta_i$.
We always assume that $\{v_i\}_{i=1}^{\infty}$ is a sequence of integers (called \emph{digits}), and that $\{\beta_i\}_{i=1}^{\infty}$ is a sequence of integers (called \emph{radices} or \emph{bases}), each $2$ or greater.
If $B_0 \equiv 1$, then $\forall i \in \nats: B_{i+1} = \beta_{i+1} B_i$.

A DSM accumulates the terms of the series for $V$ serially.
Start with an accumulator initialized to $0$. The terms involving the digits $v_1, v_2, v_3, \ldots$ are then consecutively added to the accumulator.
The values of the accumulator after each digit is added defines the \emph{head} sequence $\{H_i\}_{i=0}^\infty$ where:
\[
H_0 \equiv 0,\; \forall i \in \pnats: H_i \equiv \frac{v_1}{B_1} + \frac{v_2}{B_2} + \cdots + \frac{v_i}{B_i} .
\]
Associated with each head $H_i$ is the \emph{tail} $T_i$ defined as:
\[
\forall i \in \nats: T_i \equiv B_i \left(V - H_i\right) = B_i \left(\frac{v_{i+1}}{B_{i+1}} + \frac{v_{i+2}}{B_{i+2}} + \cdots\right) .
\]
Intuitively, $H_i$ is the approximation to the target result $V$ that has been computed after step $i$,
while $T_i$ is the error in this approximation normalized by $B_i$; here $T_i/B_i$ is analogous to a floating-point value $s 2^e$ with $T_i \sim s$ and $1/B_i \sim 2^e$. 
This definition of the tails provides the invariant $\forall i \in \nats : V = H_i + T_i/B_i$.

We can summarize the above as follows:
\begin{align*}
 &B_0 = 1,\; \forall i \in \nats: B_{i+1} = \beta_{i+1} B_i, \\
 &H_0 = 0,\; \forall i \in \nats: H_{i+1} = H_i + v_{i+1}/B_{i+1}, \;\text{and} \\
 &T_0 = V,\; \forall i \in \nats: T_{i+1} = \beta_{i+1} T_i - v_{i+1} .
\end{align*}

\lead{Digit selection}
In the recurrence
\[
\forall i \in \nats: \beta_{i+1} T_{i} = v_{i+1} + T_{i+1}
\]
note that 
\[
T_{i+1} = \frac{v_{i+2}}{\beta_{i+2}} +
\frac{v_{i+3}}{\beta_{i+2}\beta_{i+3}} +  
\cdots
\]
As we shall see in Sect.~\ref{sec:onthefly}, if the digits satisfy $\forall k \ge 2: \abs{v_k} < \beta_k$,
a simple algorithm can be used to accumulate the digits.
If this condition holds then $\abs{T_{i+1}}$, the distance between
$\beta_{i+1} T_i$ and $v_{i+1}$, is at most $1$.
Consequently, a plausible choice for $v_{i+1}$ is an integer near $\beta_{i+1} T_i$. 

We therefore introduce \emph{digit selection functions} $\forall i \in \pnats: \dsf_i: \reals \rightarrow \ints$ that ``round'' their real argument to a nearby integer, so $\forall i \in \nats: v_{i+1} \equiv \dsf_{i+1}(\beta_{i+1}T_i)$.
Paired with any digit selection function $\dsf$ is the \emph{complementary digit selection function} $\codsf : \reals \rightarrow \reals$ defined as
\[
\forall z \in \reals: \codsf(z) \equiv z - \dsf(z) .
\]
From the partition
\[
\beta_{i+1}T_i = \dsf_{i+1}(\beta_{i+1}T_i) + \codsf_{i+1}(\beta_{i+1}T_i) 
\]
of $\beta_{i+1}T_i$, we recognize that $T_{i+1} =\codsf_{i+1}(\beta_{i+1}T_i)$.

Note that $\abs{\codsf(z)}$ is the distance between $z$ and $\dsf(z)$, or equivalently the error in approximating $z$ by $\dsf(z)$.
It makes sense, then, to classify digit selection functions by the maximum value of $\abs{\codsf(z)}$ for all $z$.

\begin{defn} (Round to Nearby Integer)
	For $\Omega \in \reals$, $\rni{\Omega}$ is the collection of all digit selection functions $\dsf: \reals \rightarrow \ints$ such that $\forall z \in \reals: \abs{\codsf(z)} \le \Omega$.
\end{defn}

We argue that $\rni{\Omega} = \emptyset$ when $\Omega < 1/2$. 
For suppose $\rni{\Omega}$ is nonempty and choose $\dsf \in \rni{\Omega}$. 
When $z = n + 1/2$ for some integer $n$, $\dsf(z)$ is an integer in the interval $[z-\Omega,z+\Omega]$. 
But that is impossible because there are no integers in this interval. Therefore, $\Omega < 1/2$ implies
$\rni{\Omega} = \emptyset$. For this reason we always assume that $\Omega \ge 1/2$. 

When $\dsf \in \rni{\Omega}$ with $1/2 \le \Omega < 1$, $\dsf(z)$ belongs to the interval $[z-\Omega,z+\Omega]$, whose length $2\Omega$ is in the interval $[1, 2)$.
There is always one, and sometimes two, integers in this interval, and $\dsf(z)$ must be one of these integers.

\begin{thm} 
	\label{thm:arni_digit_bound}
	If $v \equiv \dsf(z)$ where $\dsf \in \rni{\Omega}$, then $\abs{v} \le \floor{\abs{z} + \Omega}$.
\end{thm}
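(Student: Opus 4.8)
The plan is to combine the defining bound for membership in $\rni{\Omega}$ with the triangle inequality, and then exploit the integrality of $v$ to sharpen a real-valued upper bound into one involving the floor function.

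First I would unpack the hypothesis $\dsf \in \rni{\Omega}$. Writing $v \equiv \dsf(z)$, the definition of the complementary digit selection function gives $\codsf(z) = z - \dsf(z) = z - v$, and membership in $\rni{\Omega}$ supplies the bound $\abs{\codsf(z)} \le \Omega$. Hence $\abs{z - v} \le \Omega$, which says exactly that $v$ lies within distance $\Omega$ of $z$.

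Next I would apply the triangle inequality to transfer this into a bound on $\abs{v}$ alone. Since $v = z - (z - v)$, we have $\abs{v} \le \abs{z} + \abs{z - v} \le \abs{z} + \Omega$. This is a bound on $\abs{v}$ by the \emph{real} quantity $\abs{z} + \Omega$; the final ingredient is to improve it to a bound by the corresponding integer floor.

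The key observation for this last step is that $\dsf$ maps into $\ints$, so $v$ is an integer and therefore $\abs{v}$ is a non-negative integer. The defining property of the floor function --- that any integer not exceeding a real number $r$ also does not exceed $\floor{r}$ --- then upgrades $\abs{v} \le \abs{z} + \Omega$ to the desired conclusion $\abs{v} \le \floor{\abs{z} + \Omega}$. There is no genuine obstacle here; the only point meriting care is this closing move, where the integrality of $\abs{v}$ is essential, since the inequality would fail for a general real bound.
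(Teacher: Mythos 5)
Your proposal is correct and follows essentially the same route as the paper's proof: write $v = z - \codsf(z)$, apply the triangle inequality with the bound $\abs{\codsf(z)} \le \Omega$, and then use the integrality of $v$ to pass from $\abs{v} \le \abs{z} + \Omega$ to $\abs{v} \le \floor{\abs{z} + \Omega}$. Your closing remark correctly isolates the one point of substance — that the floor step is valid only because $\abs{v}$ is an integer — which the paper states more tersely.
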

\begin{proof}
	Since $\forall x: \abs{\codsf(x)} \le \Omega$ and $v = \dsf(z)$, applying the triangle inequality yields 
	\[
	\abs{v} = \abs{\dsf(z)} = \abs{z - \codsf(z)} \le \abs{z} + \Omega.
	\]
	The result follows by applying the floor function to the inequality and using the fact that $v$ is an integer. 
\end{proof}

\begin{algorithm}[t]
	\caption{Basic DSM that computes $\{(B_i, H_i, T_i)\}_{i=0}^\infty$ for $V \in \reals$ where $\forall i \in \pnats: (\dsf_i \in \rni{\Omega_i}) \wedge (\beta_i\ge2)$.\label{alg:dsm_basic}}
	\begin{algorithmic}[0] 
		\Procedure{DSM\_Basic}{$V$}
		\State $(B_0, H_0, T_0) \assign (1, 0, V)$
		\For{$i \assign 0,1,2,\ldots$}
        \State \{\textbf{Invariant}: $V = H_{i} + T_{i}/B_{i}$\}
		\State $v_{i+1} \assign \dsf_{i+1}(\beta_{i+1} T_{i})$
		\State $B_{i+1} \assign \beta_{i+1} B_{i}$
		\State $H_{i+1} \assign H_{i} + v_{i+1}/B_{i+1}$
		\State $T_{i+1} \assign \beta_{i+1}T_i - v_{i+1}$
		\EndFor
		\EndProcedure
	\end{algorithmic}
\end{algorithm}

Algorithm \ref{alg:dsm_basic} is the result of combining the information presented above.\footnote{See the description of radix-conversion in~\cite{KnuthArt2}.}
For this algorithm, bounds on the absolute error $\abs{T_i}/B_i$ in the estimate $H_i$ of $V$, and on the digit $v_i$, are easy to derive. We know that
\[ 
\abs{T_0} = V \;\;\mbox{and} \;\; \forall i \in \pnats: \abs{T_i} \le \Omega_i
\]
because $\forall i \in \pnats: T_i = \codsf_i(\beta_i T_{i-1})$ where $\dsf_i \in \rni{\Omega_i}$, and so applying Theorem \ref{thm:arni_digit_bound} yields the digit bounds
\begin{align*}
\forall i \in \pnats: \abs{v_i} \le \begin{cases*}
\floor{\beta_1 V + \Omega_1}            & \text{if}\; $i = 1$ \\
\floor{\beta_i \Omega_{i-1} + \Omega_i} & \text{if}\; $i > 1$
\end{cases*} .
\end{align*}
When the sequence $\{\Omega_i\}_{i=1}^\infty$ is bounded, so too is the tail sequence $\{T_i\}_{i=0}^\infty$. 
The following result proves that the head sequence converges to $V$ if the tail sequence is bounded. 

\begin{thm}\label{thm:dsm_bnd_convergence}
Let $V$ and $\{\beta_i\}_{i=1}^\infty$ be given as described in Algorithm \ref{alg:dsm_basic}. If the sequence $\{T_i\}_{i=0}^\infty$ is bounded, then the sequence $\{H_i\}_{i=0}^\infty$ converges to $V$. 
\end{thm}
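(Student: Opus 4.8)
The plan is to reduce everything to the invariant $V = H_i + T_i/B_i$ already established in the text, which rearranges to $V - H_i = T_i/B_i$. Hence the approximation error satisfies $\abs{V - H_i} = \abs{T_i}/B_i$, and to prove that $\{H_i\}$ converges to $V$ it suffices to show this quantity tends to $0$ as $i \to \infty$. This is the right framing because it cleanly separates the two ingredients we control: the numerator is bounded by hypothesis, and the denominator grows.

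First I would invoke the boundedness assumption directly: pick a constant $M \ge 0$ with $\abs{T_i} \le M$ for every $i \in \nats$. This gives the uniform estimate $\abs{V - H_i} \le M/B_i$, so the problem collapses to showing $1/B_i \to 0$, i.e.\ $B_i \to \infty$. Next I would establish the growth of $B_i$ by a routine induction from the defining recurrence: since $B_0 = 1$, $B_{i+1} = \beta_{i+1} B_i$, and every $\beta_j \ge 2$, we obtain $B_i \ge 2^i$ for all $i \in \nats$. As $2^i \to \infty$, it follows that $B_i \to \infty$ and therefore $M/B_i \to 0$.

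Combining these gives a squeeze: $0 \le \abs{V - H_i} \le M/B_i \le M/2^i$, and since the right-hand side tends to $0$ we conclude $\abs{V - H_i} \to 0$, which is exactly the statement that $\{H_i\}$ converges to $V$. I do not expect a genuine obstacle here; the argument is elementary once the invariant is in hand. The only points needing a little care are the inductive verification that $B_i \ge 2^i$ (the crux that forces $B_i$ to diverge, relying on each radix being at least $2$) and the degenerate case $M = 0$, in which $T_i = 0$ for all $i$ and $H_i = V$ exactly, so convergence is immediate.
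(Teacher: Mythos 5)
Your proposal is correct and follows essentially the same route as the paper's proof: invoke the invariant $V = H_i + T_i/B_i$, bound $\abs{T_i}$ by the hypothesis, use $B_i \ge 2^i$ (which the paper asserts without the explicit induction you sketch) to get $\abs{V - H_i} \le M/2^i \to 0$. The only difference is cosmetic elaboration on your part (the induction detail and the $M=0$ case, which the squeeze already handles without special treatment).
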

\begin{proof}
Suppose the sequence $\{T_i\}_{i=0}^\infty$ is bounded, i.e., $\forall i \in \nats: \abs{T_i} \le \Theta$ for some constant $\Theta$. Because $\forall i \in \nats: B_i \ge 2^i$, then $\abs{T_i}/B_i \le \Theta/2^i$ and therefore $\lim_{i \rightarrow \infty} T_i/B_i = 0$.
Now $H_i = V - T_i/B_i$, and so
\begin{align*}
\lim_{i \rightarrow \infty} H_i &= \lim_{i \rightarrow \infty} (V - T_i/B_i) \\
	&= \lim_{i \rightarrow \infty} V -  \lim_{i \rightarrow \infty} T_i/B_i = V .
\end{align*} 
\end{proof}

\section{On-the-fly Technique\label{sec:onthefly}}

When the on-the-fly technique applies, it offers an efficient way to accumulate the (integer) digits generated by a DSM. 
The binary on-the-fly technique can be described as follows.
We assume integers are represented using two's complement notation, and that $\forall i \in \pnats: \beta_i \equiv 2^{\mu_i}$ where each $\mu_i \in \pnats$.

First, no accumulation is needed to form $H_1 = v_1$, nor is there any restriction placed on the magnitude of $v_1$. 
Next, for $i \ge 2$, consider how the digit $v_i$ is accumulated into $H_{i-1}$ to form $H_i$:
\[
H_i \equiv H_{i-1} + \frac{v_i}{B_i} .
\]
Adding $v_i/B_i$ to $H_{i-1}$ creates a carry chain whose length can be nearly the bit-width of $H_{i-1}$.
The goal of the on-the-fly technique is to eliminate this addition and its associated carry chain.

\begin{figure}[t]
	\centering
	\begin{tabular}{crcccc|llll}
		\cmidrule{3-10}    $\beta_i A_{i-1}$ &       & \multicolumn{4}{c|}{$A_{i-1}$} & $0$     & $0$     & $0$     & \multicolumn{1}{c|}{$0$}  \\
		\cmidrule{3-10}    $v_i$ &       &
		\multicolumn{1}{c}{$s$} & \multicolumn{1}{c}{$s$} &  \multicolumn{1}{c|}{$s$} & \multicolumn{1}{c}{$s$} & $v$     & $v$     & $v$     & \multicolumn{1}{c|}{$v$} \\ 
		\cmidrule{3-10}    Sum when $s = 0$ &       & \multicolumn{4}{c|}{$A_{i-1}$} & $v$     & $v$     & $v$     & \multicolumn{1}{c|}{$v$} \\
		\cmidrule{3-10}   Sum when $s = 1$ &       & \multicolumn{4}{c|}{$A_{i-1} - 1$} & $v$     & $v$     & $v$     & \multicolumn{1}{c|}{$v$} \\
		\cmidrule{3-10}    
	\end{tabular}%
	\caption{$1$-bit overlap; $\beta_i A_{i-1} + v_i$ when $\mu_i = 4$ and $s \equiv \text{signbit}(v_i)$.\label{fig:overlap_1b}}
\end{figure}

The simplest form of the on-the-fly technique assumes that 
$\forall i \ge 2: \abs{v_i} < \beta_i$, so both $v_i$ and $ v_i - 1$ have $(\mu_i+1)$-bit two's complement representations.
For each $i \ge 2$,
\[
B_i H_i = B_i H_{i-1} + v_i = \beta_i B_{i-1}H_{i-1} + v_i
\]
and so 
\[
A_i = \beta_i A_{i-1} + v_i
\]
where $A_i \equiv B_i H_i$ is the accumulated value of all of the digits from $v_1$ through $v_i$, inclusively.
Consider Figure \ref{fig:overlap_1b} which illustrates the alignment of $\beta_i A_{i-1}$ and the sign-extended form of $v_i$ when $\mu_i = 4$; note the $1$ bit overlap between the leading (sign) bit of $v_i$ and the trailing bit of $A_{i-1}$. 
When interpreted as a two's complement integer, the value of the bits of the sign-extended form of $v_i$ that overlap $A_{i-1}$ is either $-1$ or $0$. 
From this observation we draw the following conclusions:
\begin{itemize}[nosep]
	\item when $v_i \in \nats$: $s=0$ and $A_i$ is formed by concatenating the bits of $A_{i-1}$ and the $\mu_i$ trailing bits of $v_i$, and
	\item when $v_i < 0$: $s=1$ and $A_i$ is formed by concatenating the bits of $A_{i-1} - 1$ with the $\mu_i$ trailing bits of $v_i$.
\end{itemize}
Consequently, if $A_{i-1}$ and $A'_{i-1} \equiv A_{i-1}-1$ are given, then $A_i$ can be formed by appending the $\mu_i$ trailing bits of $v_i$ to a selection of either $A_{i-1}$ or $A'_{i-1}$. An analogous argument applies to the formation of $A'_i \equiv A_i - 1$ because
\[
A'_i \equiv A_i - 1 = \beta_i A_{i-1} + v_i - 1 = \beta_i A_{i-1} + w_i
\]
where we recall that $w_i \equiv v_i-1$ also has a $(\mu_i+1)$-bit two's complement representation. In summary,
\begin{align*}
A_i &= \begin{cases}
\mbox{concatenate}(A_{i-1},T_{\mu_i}(v_i)) & \text{if}\; v_i \in \nats \\
\mbox{concatenate}(A'_{i-1},T_{\mu_i}(v_i)) & \text{if}\; v_i < 0
\end{cases},  \;\text{and} \\
A'_i &= \begin{cases}
\mbox{concatenate}(A_{i-1},T_{\mu_i}(w_i)) & \text{if}\; w_i \in \nats \\
\mbox{concatenate}(A'_{i-1},T_{\mu_i}(w_i)) & \text{if}\; w_i < 0
\end{cases} .
\end{align*}
where $T_{\mu}(z)$ consist of the trailing $\mu$ bits of the two's complement representation of the integer $z$.

\begin{figure}[t]
	\centering
	\begin{tabular}{crcccc|llll}
		\cmidrule{3-10}    $\beta_i A_{i-1}$ &       & \multicolumn{4}{c|}{$A_{i-1}$} & $0$     & $0$     & $0$     & \multicolumn{1}{c|}{$0$}  \\
		\cmidrule{3-10}    $v_i$ &       &
		\multicolumn{1}{c}{$s$} & \multicolumn{1}{c|}{$s$} &  \multicolumn{1}{c}{$s$} & \multicolumn{1}{c}{$v$} & $v$     & $v$     & $v$     & \multicolumn{1}{c|}{$v$} \\ 
		\cmidrule{3-10}    
	\end{tabular}%
	\caption{$2$-bit overlap; $\beta_i A_{i-1} + v_i$ when $\mu_i = 4$ and $s \equiv \text{signbit}(v_i)$.\label{fig:overlap_2b}}
\end{figure}

This argument can be generalized in several ways. 
Consider, for example, the case where the digits cover the wider range $\forall i \ge 2: \abs{v_i} < 2\beta_i-1$. 
In this case, because $(\mu_i+2)$-bit two's complement integers range from $-2\beta_i$ to $2\beta_i-1$ inclusively, each of the integers $\{v_i-2,v_i-1,v_i,v_i+1\}$ has a $(\mu_i+2)$-bit two's complement representation.
Figure \ref{fig:overlap_2b} illustrates the addition of one of these four integers to $\beta_i A_{i-1}$; note the $2$-bit overlap between that integer and $\beta_i A_{i-1}$. 
The integer described by the bits in the overlap of the sign-extended form of the integer and $\beta_i A_{i-1}$ ranges from $-2$ to $1$, inclusively. 
Therefore, because
\[
A_i + k = \beta_i A_{i-1} + (v_i + k) \quad\text{for}\quad k \in \{-2,-1,0,1\}
\] 
we can form any one of the values $\{A_i-2,A_i-1,A_i,A_i+1\}$ by adding the corresponding integer
$\{v_i-2,v_i-1,v_i,v_i+1\}$ to $\beta_i A_{i-1}$.
For example, to form $A_i-2$ add $z_i \equiv v_i-2$ to $\beta_i A_{i-1}$. To perform this addition use the $2$ leading bits of the $(\mu_i+2)$-bit two's complement representation of $z_i$ to select to which of $\{A_{i-1}-2, A_{i-1}-1, A_{i-1}, A_{i-1}+1\}$ the trailing $\mu_i$-bits of $z_i$ are appended.

\section{DSM Using a Proxy\label{sec:dsm_proxy}}

Algorithm \ref{alg:dsm_basic} is not effective for several reasons.

First, the value of $V$ is used to initialize $T_i$. 
That's acceptable for recoding, where the algorithm converts the value of $V$ in one form (say, binary) into another form (say, decimal). 
It's also acceptable in an analysis of the algorithm. 
It is not acceptable when actually performing a division or square root because it presupposes that the
result of the computation is known before the algorithm starts.

Second, when the algorithm is applied to division or square root, the computation of the tails $T_i$ involves a nontrivial division.
For example, with the invariant written as $\forall i \in \nats: T_i = B_i(V - H_i)$, it is simple to derive for the division problem $V \equiv X/Y$ that
\[
\forall i \in \nats: T_i Y = B_i(X - H_i Y) ,
\]
and for the square root problem $V \equiv \sqrt{X}$ that
\[
\forall i \in \nats: T_i (V+H_i)/2 = B_i(X-H_i^2)/2 .
\]
In each of these equalities, the right-hand side can be computed via addition and multiplication of known finite precision values and the finite precision estimate $H_i$ of $V$.
However, given these right-hand sides, an unavoidable nontrivial division is required to determine the values of $T_i$.

\begin{algorithm}[t]
	\caption{DSM using a proxy that determines $\{(B_i, H_i, T_i)\}_{i=0}^\infty$ for $V \in \nnreals$ where $\forall i \in \pnats: (\dsf_i \in \rni{\Omega_i}) \bigwedge (\beta_i \ge 2)$.\label{alg:dsm_proxy}}
	\begin{algorithmic}[0] 
		\Procedure{DSM\_Proxy}{$V,\{\psi_i\}_{i=0}^{\infty}$}
		\State $(B_0, H_0, T_0) \assign (1, 0, V)$
		\For{$i \assign 0,1,2,\ldots$}
		\State \{\textbf{Invariant}: $V = H_{i} + T_{i}/B_{i}$\}
		\State $T^p_i   \assign (1+\psi_i) T_i$
		\State $v_{i+1} \assign \dsf_{i+1}(\beta_{i+1} T^p_i)$
		\State $B_{i+1} \assign \beta_{i+1} B_{i}$
		\State $H_{i+1} \assign H_{i} + v_{i+1}/B_{i+1}$
		\State $T_{i+1} \assign \beta_{i+1} T_{i} - v_{i+1}$
		\EndFor
		\EndProcedure
	\end{algorithmic}
\end{algorithm} 

Algorithm \ref{alg:dsm_basic} determines the next digit $v_{i+1}$ by approximately rounding $\beta_{i+1} T_i$ to an integer. 
It is plausible, then, that $v_{i+1}$ can be determined using an accurate\footnote{The accuracy of an approximation is measured by its relative error. 
The relative error of an approximation $A'$ of $A \ne 0$ is $\abs{\psi}$ where $A' = (1+\psi) A$.}
proxy $T^p_i$ for $T_i$. 
Algorithm \ref{alg:dsm_proxy} is a template for a DSM that uses a proxy $T^p_i$ for $T_i$; it reduces to Algorithm \ref{alg:dsm_basic} when  $\forall i \in \nats: \psi_i = 0$. 

We make two assumptions about the proxies $\{T^p_i\}_{i=0}^\infty$.
\begin{itemize}[nosep]
\item For analysis: The proxy $T_i^p$ can be expressed as $T^p_i = (1+\psi_i) T_i$; if $T_i \ne 0$ then $\abs{\psi_i}$ is the relative error in the approximation of $T_i$ by the proxy $T^p_i$. 
\item For implementation: The proxy $T_i^p$ can be computed without knowledge of the exact values of $V$ and $T_i$. When this assumption is satisfied, occurrences of $V$ and $T_i$ in Algorithm \ref{alg:dsm_proxy} can be eliminated. 
Examples of this elimination are presented in the following sections.
\end{itemize}

In Algorithm \ref{alg:dsm_proxy}, the sequences $\{\dsf_i\}_{i=1}^\infty$ and $\{\beta_i\}_{i=1}^\infty$ are considered to be fixed and to honor the restrictions stated in the header. 
We also suppose that $\psi_i$ depends on $V$, $T_i$, and $H_i$; the dependence on $H_i$ can be eliminated by applying the invariant $H_i = V - T_i/B_i$. In summary, $T_{i+1}$ can be determined from just $V$ and $T_i$.

To reduce the notational load, the dependence of $T_i$ and $T_i^p$ on $V$ is represented implicitly. 

\begin{thm}
	\label{thm:proxy_theorem} (Proxy Theorem) In Algorithm \ref{alg:dsm_proxy} suppose that for some $V \in \nnreals$ the sequence $\{\psi_i\}_{i=0}^\infty$ satisfies $\forall i\in\nats, t \in \reals: \abs{\psi_i(V,t)} \le \Psi_i(V,\abs{t})$ where $\Psi_i$ is a non-decreasing function of its second argument. Then for that $V$,
    \[
    \forall i \in \nats: \left(\abs{T_i} \le \tau_i(V)\right) \wedge \left(\abs{T^p_i} \le \tau_i^p(V)\right)
    \]
    where $\tau_i, \tau_i^p : \nnreals \rightarrow \reals$ are defined as
	\begin{align*}
    &\tau_0(u) \equiv u, \\
    &\forall i \in \nats: \tau_{i+1}(u) \equiv \beta_{i+1} \Psi_i(u,\tau_i(u)) \tau_i(u) + \Omega_{i+1} ,  \;\text{and} \\[3pt]
    &\forall i \in \nats: \tau_i^p(u) \equiv (1+\Psi_i(u,\tau_i(u))) \tau_i(u) .
   \end{align*}
\end{thm}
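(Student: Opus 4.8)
The plan is to fix the value $V$ guaranteed by the hypothesis and to induct on $i \in \nats$, proving the tail bound $\abs{T_i} \le \tau_i(V)$ as the primary statement; the proxy bound $\abs{T^p_i} \le \tau^p_i(V)$ will then fall out of $\abs{T_i} \le \tau_i(V)$ at the same index. Throughout I suppress the argument $V$, writing $\tau_i$, $\tau^p_i$, and $\Psi_i(\cdot) = \Psi_i(V,\cdot)$, and I read the algorithm's $\psi_i$ as the value $\psi_i(V,T_i)$, so that the hypothesis applied at $t = T_i$ gives $\abs{\psi_i} \le \Psi_i(\abs{T_i})$. A preliminary sub-induction records that $\tau_i \ge 0$ for every $i$ (indeed $\tau_i > 0$ for $i \ge 1$): we have $\tau_0 = V \ge 0$ since $V \in \nnreals$, and each step adds $\Omega_{i+1} \ge 1/2$ to the product $\beta_{i+1}\Psi_i(\tau_i)\tau_i$ of non-negative factors ($\beta_{i+1}\ge 2$, $\Psi_i \ge 0$, $\tau_i \ge 0$). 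The base case $i=0$ of the main induction is then immediate: $\abs{T_0} = V = \tau_0$.

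The engine of the argument is a single reusable step: from $\abs{T_i}\le\tau_i$ I derive both the proxy bound and the next tail bound. For the proxy, $\abs{T^p_i} = \abs{1+\psi_i}\,\abs{T_i} \le (1+\abs{\psi_i})\abs{T_i} \le (1+\Psi_i(\abs{T_i}))\abs{T_i}$; since $\Psi_i$ is non-decreasing and $\abs{T_i}\le\tau_i$, and all quantities are non-negative, multiplying $1+\Psi_i(\abs{T_i}) \le 1+\Psi_i(\tau_i)$ by $\abs{T_i}\le\tau_i$ yields $\abs{T^p_i} \le (1+\Psi_i(\tau_i))\tau_i = \tau^p_i$ (this also dispatches $\abs{T^p_0}\le\tau^p_0$). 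For the tail, write $z \equiv \beta_{i+1}T^p_i$; because $\dsf_{i+1}\in\rni{\Omega_{i+1}}$ we have $v_{i+1} = \dsf_{i+1}(z) = z - \codsf_{i+1}(z)$ with $\abs{\codsf_{i+1}(z)}\le\Omega_{i+1}$. Substituting $T^p_i = (1+\psi_i)T_i$ into $T_{i+1} = \beta_{i+1}T_i - v_{i+1}$ and cancelling the $\beta_{i+1}T_i$ terms gives the clean identity
\[
T_{i+1} = -\beta_{i+1}\psi_i T_i + \codsf_{i+1}(z),
\]
which splits $T_{i+1}$ into a proxy-error contribution and a digit-rounding contribution. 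The triangle inequality then gives $\abs{T_{i+1}} \le \beta_{i+1}\abs{\psi_i}\,\abs{T_i} + \Omega_{i+1}$, and the bound $\abs{\psi_i}\,\abs{T_i} \le \Psi_i(\abs{T_i})\abs{T_i} \le \Psi_i(\tau_i)\tau_i$ (by the same hypothesis, monotonicity, and non-negativity as above) closes the induction with $\abs{T_{i+1}} \le \beta_{i+1}\Psi_i(\tau_i)\tau_i + \Omega_{i+1} = \tau_{i+1}$.

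The step I expect to require the most care, and which is the crux of the whole theorem, is the replacement of the unknown quantities $\abs{T_i}$ and $\abs{\psi_i}$ by their majorants $\tau_i$ and $\Psi_i(\tau_i)$. This is exactly where the hypothesis that $\Psi_i$ be non-decreasing in its second argument is indispensable: it alone licenses passing from $\abs{T_i}\le\tau_i$ to $\Psi_i(\abs{T_i})\le\Psi_i(\tau_i)$, and without it the recurrence defining $\tau_{i+1}$ would not dominate $\abs{T_{i+1}}$. One must also keep track of the sign conditions so that inequalities may be multiplied termwise (if $0\le a_1\le a_2$ and $0\le b_1\le b_2$ then $a_1b_1\le a_2b_2$), which is precisely why the non-negativity of $\tau_i$ and of $\Psi_i$ was established first. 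The algebraic cancellation producing the identity for $T_{i+1}$ is the other point to get exactly right, but it is a direct substitution rather than a genuine obstacle.
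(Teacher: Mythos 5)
Your proof is correct and follows essentially the same route as the paper's: the same induction on $i$, the same key identity $T_{i+1} = -\beta_{i+1}\psi_i T_i + \codsf_{i+1}(\beta_{i+1}T^p_i)$ obtained by substituting the proxy into the recurrence, the same triangle-inequality step, and the same use of the monotonicity of $\Psi_i$ to pass from $\abs{T_i}$ to $\tau_i(V)$. The only differences are organizational: you derive the proxy bound inside the inductive step rather than in a second pass after the induction (immaterial, since the bound on $\abs{T^p_i}$ depends only on the bound on $\abs{T_i}$ at the same index), and you make explicit the non-negativity facts ($\tau_i \ge 0$, $\Psi_i \ge 0$) that the paper leaves implicit when multiplying the inequalities termwise.
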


\begin{proof}
	Suppose that $V \in \nnreals$ and the sequence $\{\psi_i\}_{i=0}^\infty$ satisfies $\forall i\in\nats, t \in \reals: \abs{\psi_i(V,t)} \le \Psi_i(V,\abs{t})$ where $\Psi_i$ is a non-decreasing function of its second argument.
	
	We inductively prove that $\forall i \in \nats: \abs{T_i} \le \tau_i(V)$ as follows. 
	The base case is true $\abs{T_0} = V = \tau_0(V)$.
	For the inductive step assume that $\abs{T_i} \le \tau_i(V)$ for some $i \in \nats$. 
	We know $T^p_i = (1+\psi_i(V,T_i)) T_i$, so application of the triangle inequality yields:
	\begin{align*}
	\abs{T_{i+1}} 
	&= \abs{\beta_{i+1} T_i - v_{i+1}} \\
	&= \abs{\beta_{i+1} T_i - \dsf_{i+1}(\beta_{i+1} T^p_i)} \\
	&= \abs{\beta_{i+1} T_i - (\beta_{i+1} T^p_i - \codsf_{i+1}(\beta_{i+1} T^p_i))} \\
	&= \abs{\beta_{i+1} (T_i - T^p_i) + \codsf_{i+1}(\beta_{i+1} T^p_i)} \\
	&=\abs{-\beta_{i+1} \psi_i(V,T_i) T_i + \codsf_{i+1}(\beta_{i+1} T^p_i)} \\
	&\le \beta_{i+1} \abs{\psi_i(V,T_i)} \abs{T_i} + \Omega_{i+1} .
	\end{align*}
	Next, apply the assumption that $\abs{\psi_i(V,t)} \le \Psi_i(V,\abs{t})$, where $\Psi_i$ is a non-decreasing function of its second argument, to continue this inequality as follows.
	\begin{align*}
	\abs{T_{i+1}} 
	&\le \beta_{i+1} \abs{\psi_i(V,T_i)} \abs{T_i} + \Omega_{i+1} \\
	&\le \beta_{i+1} \Psi_i(V,\abs{T_i}) \abs{T_i} + \Omega_{i+1} \\
	&\le \beta_{i+1} \Psi_i(V,\tau_i(V)) \tau_i(V) + \Omega_{i+1} \equiv \tau_{i+1} .
	\end{align*}
This completes the induction.

With the bounds on $\forall i \in \nats: \abs{T_i} \le \tau_i(V)$ established, the bounds on $\forall i \in \nats: \abs{T^p_i}$ are obtained as follows. For each $i \in \nats$:
	\begin{align*}
	\abs{T^p_i} &= \abs{(1+\psi_i(V,T_i)) T_i} \\
	            &\le (1+\abs{\psi_i(V,T_i)}) \abs{T_i} \\
                &\le (1+\Psi_i(V,\abs{T_i})) \abs{T_i} \\
	            &\le (1+\Psi_i(V,\tau_i(V))) \tau_i(V) \equiv \tau_i^p(V) .
	\end{align*}
\end{proof}

\begin{defn}
Let $\pee$ be the subset of functions $\preals \rightarrow \reals$ for which
$p \in \pee$ whenever $p(V)$ is a finite sum of terms of the form $c V^n$ where $c \in \nnreals$ and $n \in \ints$. ($\pee$ is a subset of the posynomials in $V$~\cite{boyd2004convex, dufin1967geometric}.)
\end{defn}

Each $p \in \pee$ is a convex function because on $\preals$ its second derivative is non-negative. Among the elements of $\pee$ are each non-negative constant function as well as the identity function $\nu$ where $\forall u \in \preals: \nu(u) = u$. We also have these closure properties: for $p, q \in\pee$ the functions $p / \nu$, $p+q$, $p q \in \pee$.

\begin{cor} \label{cor:proxy_max}
	Let the assumptions of Theorem \ref{thm:proxy_theorem} hold for every $V \in \preals$.
	If 
	\[
	\forall i \in \nats, p \in \pee: \Phi_i(p) \in \pee .
	\]
	where $\Phi_i: \pee\rightarrow\preals\rightarrow\reals$ is defined as 
	\[
	\forall p \in \pee,u \in \preals: \Phi_i(p)(u) = \Psi_i(u,p(u)) .
	\]
	then for any closed subinterval $[a,b]$ of $\preals$,
	\begin{align*}
	&\forall i \in \nats, u \in [a,b]; \tau_i(u) \le t_i \equiv \max{(\tau_i(a),\tau_i(b))},  \\
	&\forall i \in \nats, u \in [a,b]; \tau_i^p(u) \le t^p_i \equiv \max{(\tau_i^p(a),\tau_i^p(b))} .
	\end{align*}
\end{cor}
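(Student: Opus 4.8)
The plan is to show that every $\tau_i$ and every $\tau_i^p$ belongs to the class $\pee$, and then to exploit the elementary fact that a convex function on a closed interval is bounded above by the larger of its two endpoint values. The membership claim is what does all the work; the endpoint bound is then automatic.

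First I would prove by induction on $i$ that $\tau_i \in \pee$. The base case is immediate, since $\tau_0 = \nu$, the identity function, which the discussion preceding the corollary records as an element of $\pee$. For the inductive step, suppose $\tau_i \in \pee$. By definition $\Phi_i(\tau_i)(u) = \Psi_i(u,\tau_i(u))$, so the composite quantity $\Psi_i(u,\tau_i(u))$ appearing in the recurrence is exactly $\Phi_i(\tau_i)$, which lies in $\pee$ by the hypothesis of the corollary. Now $\tau_{i+1} = \beta_{i+1}\,\Phi_i(\tau_i)\,\tau_i + \Omega_{i+1}$, where $\beta_{i+1} \ge 2$ and $\Omega_{i+1} \ge 1/2$ are non-negative constants and hence in $\pee$; the stated closure of $\pee$ under addition and multiplication then gives $\tau_{i+1} \in \pee$, completing the induction. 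With $\tau_i \in \pee$ established for all $i$, the same closure properties immediately yield $\tau_i^p \in \pee$, since $\tau_i^p = (1+\Phi_i(\tau_i))\,\tau_i$ is the product of $\tau_i$ with the sum of the constant $1$ and the $\pee$-element $\Phi_i(\tau_i)$.

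Next I would invoke convexity. Every $p \in \pee$ is convex on $\preals$, as noted in the paper (its second derivative is non-negative). For a convex $f$ and any $u \in [a,b]$, writing $u = \lambda a + (1-\lambda) b$ with $\lambda \in [0,1]$ gives $f(u) \le \lambda f(a) + (1-\lambda) f(b) \le \max(f(a),f(b))$. Applying this to $\tau_i$ yields $\tau_i(u) \le \max(\tau_i(a),\tau_i(b)) = t_i$, and applying it to $\tau_i^p$ yields $\tau_i^p(u) \le \max(\tau_i^p(a),\tau_i^p(b)) = t_i^p$, which is exactly the assertion.

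The two displayed bounds then follow with essentially no further calculation, so the only real content is the induction, and its crux is recognizing that the recurrence defining $\tau_{i+1}$ is assembled entirely from $\pee$-preserving operations (non-negative constants, the identity, sums, and products) together with the one nontrivial input, namely the hypothesis that each $\Phi_i$ maps $\pee$ into $\pee$. That hypothesis is precisely what tames the term $\Psi_i(u,\tau_i(u))$, the single place where the problem data (the relative-error bound $\Psi_i$) enters; absent it, there is no reason for the composite to remain a posynomial and the endpoint-maximum conclusion could fail. I therefore expect the main obstacle, such as it is, to be checking this step cleanly, i.e. confirming that the corollary's hypothesis is calibrated exactly to feed the closure properties of $\pee$.
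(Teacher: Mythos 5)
Your proposal is correct and follows essentially the same route as the paper's proof: an induction showing each $\tau_i$ (and hence each $\tau_i^p$) lies in $\pee$ via the $\Phi_i$-hypothesis and the closure properties, followed by the observation that convex functions on $[a,b]$ are maximized at an endpoint. The only cosmetic difference is that you spell out the endpoint bound via a convex combination, where the paper simply cites a convexity reference.
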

\begin{proof}
	Let the assumptions of this corollary hold.
	We first prove inductively that $\tau_i \in \pee$ for each $i \in \nats$. 
	The base case is true because $\tau_0 = \nu \in \pee$. For the inductive step let $\tau_i \in \pee$ for some $i \in \nats$. By assumption $\Phi_i(\tau_i) \in \pee$, so by the closure properties $\tau_{i+1} = \beta_{i+1}\Phi_i(\tau_i)\tau_i + \Omega_{i+1} \in \pee$, and this completes the inductive argument. 
	Next, consider $\tau_i^p$ for any $i \in \nats$. 
	By assumption $\Phi_i(\tau_i) \in \pee$ because $\tau_i \in \pee$, so by the closure properties $\tau_i^p = (1+\Phi_i(\tau_i)) \tau_i \in \pee$.
	
	Let $[a,b]$ be a closed subinterval of $\preals$. 
	Because functions in $\pee$ are convex, we know that $\tau_i$ and $\tau_i^p$ attain their maximum on $[a,b]$ at either $a$ or $b$.~\cite{niculescu2006convex}.
\end{proof}

Combining the Theorem \ref{thm:arni_digit_bound} with Corollary \ref{cor:proxy_max} yields for each $i \in \nats$ and $V \in [a,b]$ that
\[
  \abs{T_i} \le t_i\quad\text{and}\quad 
  \abs{v_{i+1}} \le \floor{\beta_{i+1} t^p_i + \Omega_{i+1}} .
\]

The formalization of the Proxy Theorem using the {HOL~Light} theorem prover is presented in the appendix.

\section{DSM for Division\label{sec:dsm_division}}

As discussed in section \ref{sec:scaling}, we consider the computation of $V \equiv X/Y$ where $X\in[1/2,1)$ and $Y\in[1,2)$.
Algorithm \ref{alg:dsm_divide} is an effective DSM that computes $V$; it uses an approximation $g(Y)$ of $1/Y$ obtained from, say, a lookup table. (Microprocessors often have an approximate reciprocal instruction.)
The relative error in this approximation at $Y$ is $\abs{\sigma(Y)}$ where $\sigma : [1,2) \rightarrow \reals$ is defined so that
\[
\forall Y \in [1,2): g(Y) \equiv (1+\sigma(Y))/Y .
\]
We assume $\forall Y \in [1,2): \abs{\sigma(Y)} \le \Sigma$ for some constant $\Sigma$.

\begin{algorithm}[t]
	\caption{DSM using a proxy for division that determines $\{(B_i, H_i, R_i)\}_{i=0}^\infty$ where $X \in [1/2,1)$, $Y \in [1,2)$, $V \equiv X/Y$, and $\forall i \in \pnats: (\dsf_i \in \rni{\Omega_i}) \bigwedge (\beta_i \ge 2)$.\label{alg:dsm_divide}}
	\begin{algorithmic}[0] 
		\Procedure{DSM\_DIV}{$X,Y$}
		\State $(B_0, H_0, R_0) \assign (1, 0, X)$
		\For{$i \assign 0,1,2,\ldots$}
        \State \{\textbf{Invariant}: $X = H_i Y + R_i/B_i$\}
		\State $T^p_i \assign g(Y) R_i$
		\State $v_{i+1} \assign \dsf_{i+1}(\beta_{i+1} T^p_i)$
		\State $B_{i+1} \assign \beta_{i+1} B_{i}$
		\State $H_{i+1} \assign H_{i} + v_{i+1}/B_{i+1}$
		\State $R_{i+1} \assign \beta_{i+1} R_{i} - v_{i+1} Y$
		\EndFor
		\EndProcedure
	\end{algorithmic}
\end{algorithm} 

Reintroduce into Algorithm \ref{alg:dsm_divide} the recursive computation of $T_i$ as in Algorithm \ref{alg:dsm_proxy}, and with it the invariant $\forall i \in \nats: V = H_i + T_i/B_i$. As described in section \ref{sec:dsm_proxy}, from this invariant we find that
\[
\forall i \in \nats: T_i Y = \underbrace{B_i (X - H_i Y)}_{\tilde{R}_i}.
\]
The $\tilde{R}_i$ are called \emph{partial remainders} for division and admit, for all $i \in \nats$, the identity:
\begin{align*} 
\tilde{R}_{i+1} - \beta_{i+1} \tilde{R}_i &= B_{i+1}(X - H_{i+1} Y) - \beta_{i+1} B_i (X - H_i Y) \\
&= - B_{i+1}(H_{i+1} - H_i) Y \\
&= - v_{i+1} Y .
\end{align*}
We conclude that the partial remainders $\tilde{R}_i$ form one solution of the recurrence 
\begin{align*}
&\tilde{R}_0 = X, \\ 
&\forall i \in \nats: 
\tilde{R}_{i+1} = \beta_{i+1} \tilde{R}_i - v_{i+1} Y.
\end{align*}
The $R_i$ computed by Algorithm \ref{alg:dsm_divide} form another solution of this recurrence. Because this recurrence has a unique solution, we conclude that $\forall i \in \nats: \tilde{R}_i = R_i$.

The approximate identity $g(Y)Y \approx 1$ allows division by $Y$ to be replaced, approximately, by multiplication by $g(Y)$. Recall that $\forall i \in \nats: T_i Y = R_i$, so the proxy $T_i^p$ for $T_i$ is
\[
\forall i \in \nats: T^p_i \equiv g(Y) R_i .
\]
A short computation shows that
\begin{align*}
\forall i \in \nats: T^p_i = g(Y) R_i &= g(Y) Y T_i = (1+\sigma(Y)) T_i,
\end{align*}
so the Proxy Theorem \ref{thm:proxy_theorem} can be applied with $\forall i \in \nats: \psi_i(V,t) \equiv \sigma(Y)$ and $\forall i \in \nats:\Psi_i(V,\tau) \equiv \Sigma$ because 
\[
\forall i \in \nats: \abs{\psi_i(V,t)} \equiv \abs{\sigma(Y)} \le \Sigma \equiv \Psi_i(V,\abs{t}) .
\]
Clearly $\forall i \in \nats, p \in \pee: \Phi_i(p) = \Sigma \in \pee$, so Corollary \ref{cor:proxy_max} applies. 
We conclude that $\forall i \in \nats:\abs{T_i} \le t_i \equiv \tau_i(1)$ and $\forall i \in \nats:\abs{T^p_i} \le t_i^p \equiv \tau_i^p(1)$ because
each $\tau_i$ and $\tau_i^p$ is a non-negative increasing linear function on $[1/4,1)$.

\begin{algorithm}[t]
	\caption{DSM using a proxy for square root that determines $\{(B_i, H_i, R_i)\}_{i=0}^\infty$ where $X \in [1/4,1)$, $V \equiv \sqrt{X}$, and $\forall i \in \pnats: (\dsf_i \in \rni{\Omega_i}) \bigwedge (\beta_i \ge 2)$.\label{alg:dsm_squareroot}}
	\begin{algorithmic}[0] 
		\Procedure{DSM\_SQRT}{$X$}
		\State $(B_0, H_0, R_0) \assign (1, 0, X/2)$
		\For{$i \assign 0,1,2,\ldots$}
		\State \{\textbf{Invariant}: $X = H_i^2 + 2R_i/B_i$\}
		\State $T^p_i \assign \mu_i g(X) R_i$
		\State $v_{i+1} \assign \dsf_{i+1}(\beta_{i+1} T^p_i)$
		\State $B_{i+1} \assign \beta_{i+1} B_{i}$
		\State $H_{i+1} \assign H_{i} + v_{i+1}/B_{i+1}$
		\State $R_{i+1} \assign \beta_{i+1} R_{i} - v_{i+1} (H_{i+1} + H_i)/2$
		\EndFor
		\EndProcedure
	\end{algorithmic}
\end{algorithm}


\section{DSM for Square Root\label{sec:dsm_squareroot}}

As discussed in section \ref{sec:scaling}, we consider the computation of $V \equiv \sqrt{X}$ for $X\in[1/4,1)$. Algorithm \ref{alg:dsm_squareroot} is an effective DSM that computes $V$; it uses an approximation $g(X)$ of $1/\sqrt{X}$. (Microprocessors often have an approximate reciprocal square root instruction.)
The relative error in this approximation at $X$ is $\abs{\sigma(X)}$ where $\sigma : [1/4,1) \rightarrow \reals$ is defined so that
\[
\forall X \in [1/4,1): g(X) \equiv (1+\sigma(X))/\sqrt{X} .
\]
We assume $\forall X \in [1/4,1): \abs{\sigma(X)} \le \Sigma$ for some constant $\Sigma$.

Reintroduce into Algorithm \ref{alg:dsm_squareroot} the recursive computation of $T_i$ as in Algorithm \ref{alg:dsm_proxy}, and with it the invariant $\forall i \in \nats: V = H_i + T_i/B_i$. As described in section \ref{sec:dsm_proxy}, from this invariant we find that
\[
\forall i \in \nats: T_i (V+H_i)/2 = \underbrace{B_i (X - H_i^2)/2}_{\tilde{R}_i}.
\]
The $\tilde{R}_i$ are called \emph{partial remainders} for square root and admit, for all $i \in \nats$, the identity:
\begin{align*} 
\tilde{R}_{i+1} - \beta_{i+1} \tilde{R}_i &= B_{i+1}\frac{X - H_{i+1}^2}{2} - \beta_{i+1} B_i \frac{X - H_i^2}{2} \\
&= - B_{i+1}(H_{i+1} - H_i)\frac{H_{i+1} + H_i}{2} \\
&= - v_{i+1}\frac{H_{i+1} + H_i}{2} .
\end{align*}
We conclude that the partial remainders $\tilde{R}_i$ form one solution of the recurrence 
\begin{align*}
&\tilde{R}_0 = X/2, \\ 
&\forall i \in \nats: 
\tilde{R}_{i+1} = \beta_{i+1} \tilde{R}_i - v_{i+1} (H_{i+1}+H_i)/2 .
\end{align*}
The $R_i$ computed by Algorithm \ref{alg:dsm_squareroot} form another solution of this recurrence. Because this recurrence has a unique solution, we conclude that $\forall i \in \nats: \tilde{R}_i = R_i$.

The proxy $T_i^p$ for $T_i$ is obtained by dividing $R_i$ by an approximation of $(V+H_i)/2$.  
We argue that the approximate identity $\forall i \in \nats: \mu_i g(X) (V+H_i)/2 \approx 1$ holds where
\[
\mu_i \equiv (\text{if $i = 0$ then $2$ else $1$})
\] 
because $g(X)V \approx 1$, $H_0 = 0$, and we expect $\forall i \in \pnats: H_i \approx V$. This approximate identity allows division by $(V+H_i)/2$ to be replaced with multiplication by $\mu_i g(X)$, so  the proxy $T_i^p$ for $T_i$ is 
\[
\forall i \in \nats: T^p_i \equiv \mu_i g(X) R_i .
\]
(The invariant tells us that $T_i = 2B_i V$ when $(V+H_i)/2 = 0$.)

Let $X \in [1/4,1)$ be fixed, so $V \equiv \sqrt{X} \in [1/2,1)$.
For any $i \in \nats$ we know $(V+H_i)/2 = V - T_i/(2B_i) = V (1 - T_i/(2VB_i))$ and $g(X)V = 1 + \sigma(X)$, so
\begin{align*}
	T^p_i &\equiv \mu_i g(X)  R_i \\
	      &= \mu_i g(X)          ((V + H_i)/2) T_i \\
	      &= \mu_i g(X) V        (1 - T_i/(2VB_i)) T_i \\
	      &= \mu_i (1+\sigma(X)) (1 - T_i/(2VB_i)) T_i .
\end{align*}
Therefore, $T^p_i = (1+\psi_i(V,T_i)) T_i$ where
\begin{align*}
	\psi_i(V,t) &\equiv \sigma(X) - \begin{cases*}
0 & \text{if}\; $i=0$ \\
(1+\sigma(X))(t/(2 V B_i))& \text{if}\; $i > 0$
\end{cases*}
\end{align*}
because $\mu_0 (1-T_0/(2VB_0)) = 1$, and so the Proxy Theorem \ref{thm:proxy_theorem} can be applied using
\begin{align*}
\Psi_i(V,\abs{t}) &\equiv \Sigma + \begin{cases*}
0 & \text{if}\; $i=0$ \\
(1+\Sigma)(\abs{t}/(2 V B_i))& \text{if}\; $i > 0$
\end{cases*} .
\end{align*}
Note that the first term $\Sigma$ also occurs in $\Psi_i$ for division. Clearly $\forall i \in \nats, p \in \pee: \Phi_i(p) \in \pee$, so
Corollary \ref{cor:proxy_max} applies and we conclude that $\abs{T_i} \le t_i \equiv \max{(\tau_i(1/2),\tau_i(1))}$ and $\abs{T^p_i} \le t_i^p \equiv \max{(\tau_i^p(1/2),\tau_i^p(1))}$.

\section{Application\label{sec:application}}

\begin{sidewaystable}
	\centering
	\caption{DSM (using a proxy) for Division and Square Root with $\Sigma = 2^{-9}$ and $\Omega = 5/8$.\label{tab:dsm_div_sqrt}}%
	\begin{tabular}{ccccccccccccc}
		\hline
		\multicolumn{6}{|c}{Division}                 & \multicolumn{1}{c|}{Digit} & \multicolumn{3}{c|}{V = 1/4} & \multicolumn{3}{c|}{V = 1} \\
		\multicolumn{1}{|c}{i} & $log_2(\beta_i)$ & $\beta_i$ & $B_i$ & $t_i$ & $t^p_i$ & \multicolumn{1}{c|}{Bound} & $\tau_i(V)$ & $\Phi_i(\tau_i)(V)$ & \multicolumn{1}{c|}{$\tau_i^p(V)$} & $\tau_i(V)$ & $\Phi_i(\tau_i)(V)$ & \multicolumn{1}{c|}{$\tau_i^p(V)$} \\
		\hline
		0     &       &       & 1.00E+00 & 1.0000 & 1.0020 &       & 0.2500 & 0.0020 & 0.2505 & 1.0000 & 0.0020 & 1.0020 \\
		1     & 7     & 128   & 1.28E+02 & 0.8750 & 0.8767 & 128   & 0.6875 & 0.0020 & 0.6888 & 0.8750 & 0.0020 & 0.8767 \\
		2     & 7     & 128   & 1.64E+04 & 0.8438 & 0.8454 & 112   & 0.7969 & 0.0020 & 0.7984 & 0.8438 & 0.0020 & 0.8454 \\
		3     & 7     & 128   & 2.10E+06 & 0.8359 & 0.8376 & 108   & 0.8242 & 0.0020 & 0.8258 & 0.8359 & 0.0020 & 0.8376 \\
		4     & 7     & 128   & 2.68E+08 & 0.8340 & 0.8356 & 107   & 0.8311 & 0.0020 & 0.8327 & 0.8340 & 0.0020 & 0.8356 \\
		&       &       &       &       &       &       & & & & & & \\
		\hline
		\multicolumn{6}{|c}{Division}                 & \multicolumn{1}{c|}{Digit} & \multicolumn{3}{c|}{V = 1/4} & \multicolumn{3}{c|}{V = 1} \\
		\multicolumn{1}{|c}{i} & $log_2(\beta_i)$ & $\beta_i$ & $B_i$ & $t_i$ & $t^p_i$ & \multicolumn{1}{c|}{Bound} & $\tau_i(V)$ & $\Phi_i(\tau_i)(V)$ & \multicolumn{1}{c|}{$\tau_i^p(V)$} & $\tau_i(V)$ & $\Phi_i(\tau_i)(V)$ & \multicolumn{1}{c|}{$\tau_i^p(V)$} \\
		\hline
		0     &       &       & 1.00E+00 & 1.0000 & 1.0020 &       & 0.2500 & 0.0020 & 0.2505 & 1.0000 & 0.0020 & 1.0020 \\
		1     & 7     & 128   & 1.28E+02 & 0.8750 & 0.8767 & 128   & 0.6875 & 0.0020 & 0.6888 & 0.8750 & 0.0020 & 0.8767 \\
		2     & 5     & 32    & 4.10E+03 & 0.6797 & 0.6810 & 28    & 0.6680 & 0.0020 & 0.6693 & 0.6797 & 0.0020 & 0.6810 \\
		3     & 7     & 128   & 5.24E+05 & 0.7949 & 0.7965 & 87    & 0.7920 & 0.0020 & 0.7935 & 0.7949 & 0.0020 & 0.7965 \\
		4     & 7     & 128   & 6.71E+07 & 0.8237 & 0.8253 & 102   & 0.8230 & 0.0020 & 0.8246 & 0.8237 & 0.0020 & 0.8253 \\
		&       &       &       &       &       &       &       &       &       &       &       &  \\
		\hline
		\multicolumn{6}{|c}{Square Root}              & \multicolumn{1}{c|}{Digit} & \multicolumn{3}{c|}{V = 1/2} & \multicolumn{3}{c|}{V = 1} \\
		\multicolumn{1}{|c}{i} & $log_2(\beta_i)$ & $\beta_i$ & $B_i$ & $t_i$ & $t^p_i$ & \multicolumn{1}{c|}{Bound} & $\tau_i(V)$ & $\Phi_i(\tau_i)(V)$ & \multicolumn{1}{c|}{$\tau_i^p(V)$} & $\tau_i(V)$ & $\Phi_i(\tau_i)(V)$ & \multicolumn{1}{c|}{$\tau_i^p(V)$} \\
		\hline
		0     &       &       & 1.00E+00 & 1.0000 & 1.0020 &       & 0.5000 & 0.0020 & 0.5010 & 1.0000 & 0.0020 & 1.0020 \\
		1     & 7     & 128   & 1.28E+02 & 0.8750 & 0.8797 & 128   & 0.7500 & 0.0078 & 0.7559 & 0.8750 & 0.0054 & 0.8797 \\
		2     & 7     & 128   & 1.64E+04 & 1.3761 & 1.3789 & 113   & 1.3761 & 0.0020 & 1.3789 & 1.2273 & 0.0020 & 1.2298 \\
		3     & 7     & 128   & 2.10E+06 & 0.9838 & 0.9858 & 177   & 0.9838 & 0.0020 & 0.9858 & 0.9377 & 0.0020 & 0.9396 \\
		4     & 7     & 128   & 2.68E+08 & 0.8710 & 0.8727 & 126   & 0.8710 & 0.0020 & 0.8727 & 0.8595 & 0.0020 & 0.8611 \\
		&       &       &       &       &       &       & & & & & & \\
		\hline
		\multicolumn{6}{|c}{Square Root}              & \multicolumn{1}{c|}{Digit} & \multicolumn{3}{c|}{V = 1/2} & \multicolumn{3}{c|}{V = 1} \\
		\multicolumn{1}{|c}{i} & $log_2(\beta_i)$ & $\beta_i$ & $B_i$ & $t_i$ & $t^p_i$ & \multicolumn{1}{c|}{Bound} & $\tau_i(V)$ & $\Phi_i(\tau_i)(V)$ & \multicolumn{1}{c|}{$\tau_i^p(V)$} & $\tau_i(V)$ & $\Phi_i(\tau_i)(V)$ & \multicolumn{1}{c|}{$\tau_i^p(V)$} \\
		\hline
		0     &       &       & 1.00E+00 & 1.0000 & 1.0020 &       & 0.5000 & 0.0020 & 0.5010 & 1.0000 & 0.0020 & 1.0020 \\
		1     & 7     & 128   & 1.28E+02 & 0.8750 & 0.8797 & 128   & 0.7500 & 0.0078 & 0.7559 & 0.8750 & 0.0054 & 0.8797 \\
		2     & 5     & 32    & 4.10E+03 & 0.8128 & 0.8145 & 28    & 0.8128 & 0.0022 & 0.8145 & 0.7756 & 0.0020 & 0.7772 \\
		3     & 7     & 128   & 5.24E+05 & 0.8489 & 0.8505 & 104   & 0.8489 & 0.0020 & 0.8505 & 0.8283 & 0.0020 & 0.8299 \\
		4     & 7     & 128   & 6.71E+07 & 0.8374 & 0.8390 & 109   & 0.8374 & 0.0020 & 0.8390 & 0.8322 & 0.0020 & 0.8338 \\
	\end{tabular}%
\end{sidewaystable}

The results displayed in Table~\ref{tab:dsm_div_sqrt} describe the evolution of the bounds on the tails, tail proxies, and digits for the DSM algorithms for division and square root presented in the previous two sections. 
In this table the reciprocal and reciprocal root approximations are characterized by $\Sigma \equiv 2^{-9}$, and all digit selection functions belong to $\rni{\Omega}$ for $\Omega \equiv 5/8$.
(The $P N^2$ or $P N Q$ recoders discussed in~\cite{daumas2003further} provide such digit selection functions.)

The table displays results for two choices of $\beta$-sequence:
\begin{itemize}[nosep]
	\item $\{\beta_1,\beta_2, \beta_3, \beta_4\} \equiv \{2^7,2^7,2^7,2^7\}$, and 
	\item $\{\beta_1,\beta_2, \beta_3, \beta_4\} \equiv \{2^7,2^5,2^7,2^7\}$.
\end{itemize}
for each of division and square root. For each of these we obtain from Corollary \ref{cor:proxy_max}, with $\nu$ the identity function, that
\begin{align*}
&\tau_0 \equiv \nu, \\
&\forall i \in \nats: \tau_{i+1} \equiv \beta_{i+1} \Phi_i(\tau_i) \tau_i + \Omega ,  \\[3pt]
&\forall i \in \nats: \tau_i^p \equiv (1+\Phi_i(\tau_i)) \tau_i 
\end{align*}
where for division
\[
\forall \tau \in \pee: \Phi_i(\tau) \equiv \Sigma
\]
while for square root
\[
\forall \tau \in \pee: \Phi_i(\tau) \equiv 
\begin{cases*}
\Sigma & \text{if}\; $i=0$ \\
\Sigma + (1+\Sigma)\tau/(2\nu B_i) & \text{if}\; $i > 0$
\end{cases*} .
\]
For any given value of $V$, we know the value of $\tau_0$ and so we can compute $\Phi_0(\tau_0)(V)$ and then $\tau^p_0(V)$. 
This pattern is repeated for $i=1,2,3,4$ in succession; compute $\tau_i(V)$, then $\Phi_i(\tau_i)(V)$ and $\tau_i^p(V)$. 
From Corollary \ref{cor:proxy_max} we obtain
\begin{align*}
&\forall i \in \nats, V \in [a,b]; \tau_i(V) \le t_i \equiv \max{(\tau_i(a),\tau_i(b))},  \;\text{and}\\
&\forall i \in \nats, V \in [a,b]; \tau_i^p(V) \le t^p_i \equiv \max{(\tau_i^p(a),\tau_i^p(b))}
\end{align*}
where $[a,b] \equiv [1/4,1]$ for division and $[a,b] \equiv [1/2,1]$ for square root. Finally, for $i = 1,2,3,4$:
\[
\abs{T_i} \le t_i, \quad\text{and}\quad
\abs{v_i} \le \floor{\beta_i t^p_{i-1} + \Omega} .
\]

Observe that, for square root, the first $\beta$-sequence leads to an upper bound on $\abs{T_2}$ that is larger than $1$, and so the bound on $\abs{v_3}$ is larger than $2^{\beta_3} = 2^7 = 128$. 
For the second $\beta$-sequence, obtained from the first $\beta$-sequence by decreasing $\beta_2$ from $2^7$ to $2^5$, we find that $\abs{v_i} < 2^{\beta_i}$ for $2 \le i \le 4$ as well as $\abs{T_4} < 1$; so the simplest form of on-the-fly accumulation of the digits can be applied. 
The reason why the reduction of $\beta_2$ from $2^7$ to $2^5$ is effective can be explained by the fact that
\[
\Phi_1(\tau_1) = \Sigma + (1+\Sigma)\tau_1/(2\nu\beta_1)
\]
and so
\begin{align*}
\tau_2  &= \beta_2\Sigma + \Omega_2 + \beta_2(1+\Sigma)\tau_1/(2\nu\beta_1) .
\end{align*}
From the corresponding example for division we know $\beta_2 \Sigma + \Omega_2 = 1/4 + 5/8 = 7/8$ when $\beta_2 = 2^7$.
The third term contains the ratio $\beta_2/\beta_1$, so when $\beta_2$ is reduced from $2^7$ to $2^5$ the contribution of this third term is reduced by a factor of $4$. 

We performed additional experiments using a spreadsheet implementation of the DSM for division and square root~\footnote{For readers interested in replicting our results: These Excel 2016 spreadsheets are included as ancillary files {\tt DSM\_Division.xlsm} and {\tt DSM\_SquareRoot.xlsm}. The definition of the functions {\tt dsf()}, {\tt phidiv()}, and {\tt phisqrt()} used in these spreadsheets are contained in a VBA Module. The optimization was performed by Excel's Solver Add-in using its Evolutionary mode of operation.} that expand on the results presented in Table \ref{tab:dsm_div_sqrt}. 
For specified values of the inputs ($X$ and $Y$ for division, $X$ for square root), the spreadsheet computed the slack $s_i \equiv v^{max}_i - \abs{v_i}$ where $v^{max}_i$ is the upper bound on $\abs{v_i}$ as discussed at the end of section \ref{sec:dsm_proxy}.
The spreadsheet's optimizer was used to determine inputs that made $s_i$ small, i.e., made $\abs{v_i}$ close to $v^{max}_i$. 
For both division and square root, and for each $i \in \{1,2,3,4\}$, the optimizer was able to find inputs that made $\abs{v_i}$ at least $96$ percent of $v^{max}_i$.

\section{Conclusion\label{sec:conclusion}}

The analysis presented in this paper is generic in the sense that no special properties of digit selection or reciprocal approximation are assumed.
We have not considered how the digit selection function is implemented efficiently; we refer only to the references~\cite{daumas1997recoders, daumas2003further, ercegovac1994recoding,  ercegovac1994very, lang1995very}. 
Nor have we discussed the effect of using one-sided approximations of the reciprocals, or biased digit selection functions. 

The analysis presented here also extends to higher roots. 
For example, for the cube root $V = X^{1/3}$, from $T_i = B_i(V - H_i)$ it follows that 
\[
T_i (V^2 + VH_i + H_i^2)/3 = B_i(X - H_i^3)/3 .
\]
The partial remainders $R_i \equiv B_i(X - H_i^3)/3$ satisfy a two-term recurrence. 
Also, if $\nu_i = (\text{if i == 0 then 3 else 1})$ and $g(X) \approx X^{-2/3}$, then $T_i^p \equiv \nu_i g(X) R_i$ is a natural choice as the proxy for $T_i$ because $\nu_ig(X)(V^2 + VH_i + H_i^2)/3 \approx 1$.

Prescaled division is also covered by the analysis presented here. 
Prescaled division computes $X' \equiv g(Y) X$ and $Y' \equiv g(Y ) Y = 1 + \sigma(Y)$ before the for-loop; note that $X'/Y' = X/Y$.
Inside the for-loop, the expressions
\begin{align*}
R_{i+1} &= \beta_{i+1} R_i - v_{i+1} Y  \quad\text{and}\quad
X = H_i Y + R_i/B_i
\end{align*}
for the partial remainder and the invariant become, after multiplication by $g(Y)$,
\begin{align*}
R'_{i+1} &= \beta_{i+1} R'_i - v_{i+1}  Y' \\
&= (\beta_{i+1} R'_i - v_{i+1}) - v_{i+1} \sigma(Y), \;\text{and} \\
X' &= H_i Y' + R'_i/B_i
\end{align*}
where $R'_i \equiv g(Y) R_i$. Note that $R'_0 \equiv X'$ and $T^p_i = R'_i$. The advantage of prescaled division is that, at a cost of two multiplications outside the for-loop, no multiplication inside the for-loop is needed to form the proxy $T^p_i$.  

The proofs of the Proxy Theorem, its Corollary and the applications to division and square root, including verification of some concrete error bounds for particular instances, have been formally verified using the HOL Light theorem prover~\cite{harrison1996hol}; for the details see the appendix.


\section*{Acknowledgments\label{sec:acknowledgments}}

We thank Ping Tak Peter Tang, John O'Leary, Simon Rubanovich, and David Russinoff for many conversations related to the design and validation of floating-point arithmetic units.


\appendix

\section{HOL Light proof of theorems}\label{section:appendix}

In this appendix, we discuss the full HOL Light~\cite{harrison1996hol} proof script
for the claims made in the main body of the paper.

\subsection{The main theorem~\ref{thm:proxy_theorem}}

From this point on we present the actual ASCII proof script\footnote{This HOL Light script is included as the ancillary file dsm.ml.} required for HOL
Light to prove the statements, interspersed with a few comments. Initially we
load HOL Light's fairly extensive library of multivariate real and complex
analysis. This is overkill for the relatively small amount of background
material we need, but saves us from establishing from scratch various basic
properties of convex functions. (In fact, a couple of additional properties of
convex functions of general interest were added to the libraries as a direct
result of supporting this proof.)

\begin{scriptsize}\begin{verbatim}
needs "Multivariate/realanalysis.ml";;
\end{verbatim}\end{scriptsize}

We now proceed to the main proof scripts. Note that HOL Light proof scripts
are normally wrapped up in a {\verb|prove(assertion,tactics)|} pair, but that
the intermediate steps can be explored interactively via commands such as {\tt
g} (set goal) and {\tt e} (expand current goal using tactics). For more
information about the mechanics of HOL Light interaction see the tutorial
\cite{harrison-tutorial}. Thus, the overall block for theorem~\ref{thm:proxy_theorem} is an OCaml
phrase binding to the desired name {\verb|THEOREM_V_1|} the result of proving
an assertion

\begin{scriptsize}\begin{verbatim}
let THEOREM_V_1 = prove
 (`!(V:real) (beta:num->real) (omega:num->real) (DSF:num->real->real)
    (B:num->real) (H:num->real) (v:num->real) (Tl:num->real) (Tp:num->real)
    (PSI:num->real#real->real) (psi:num->real#real->real)
    (tau:num->real->real) (taup:num->real->real).

        // Environmental assumptions including nondecreasing property
        &0 <= V /\
        (!i. i >= 0 ==> beta i > &0) /\
        (!i. i >= 1 ==> (!x. abs (x - DSF i x) <= omega i)) /\
        (!i. i >= 0 ==> abs (psi i (V,Tl i)) <= PSI i (V,abs(Tl i))) /\
        (!i x y. &0 <= x /\ x <= y ==> PSI i (V,x) <= PSI i (V,y)) /\

        (!u. tau 0 u = u) /\
        (!i u. tau (i + 1) u =
               beta (i + 1) * PSI i (u,tau i u) * tau i u + omega (i + 1)) /\
        (!i u. taup i u = (&1 + PSI i (u,tau i u)) * tau i u) /\

        // Computing recursively
        B 0 = &1 /\ H 0 = &0 /\ Tl 0 = V /\

        (!i. Tp i = (&1 + psi i (V,Tl i)) * Tl i) /\
        (!i. v (i + 1) = DSF (i + 1) (beta (i + 1) * Tp i)) /\
        (!i. B (i + 1) = beta (i + 1) * B i) /\
        (!i. H (i + 1) = H i + v (i + 1) / B (i + 1)) /\
        (!i. Tl (i + 1) = beta (i + 1) * Tl i - v (i + 1))

        // Conclude loop invariant and bounds.
        ==> (!i. V = H i + Tl i / B i) /\
            (!i. i >= 0 ==> abs(Tl i) <= tau i V) /\
            (!i. i >= 0 ==> abs(Tp i) <= taup i V)`,
\end{verbatim}\end{scriptsize}

\noindent using the tactic script that follows, starting with some initial
breakdown of the goal stripping off outer quantifiers and turning the
antecedents of implications into assumptions of the goal state:

\begin{scriptsize}\begin{verbatim}
  REPEAT GEN_TAC THEN REWRITE_TAC[GE; real_gt; real_gt; LE_0] THEN
  STRIP_TAC THEN
\end{verbatim}\end{scriptsize}

We first establish by induction that all $B_i$ are strictly positive:

\begin{scriptsize}\begin{verbatim}
  SUBGOAL_THEN `!i:num. &0 < B i` ASSUME_TAC THENL
   [INDUCT_TAC THEN ASM_SIMP_TAC[REAL_LT_01; ADD1; REAL_LT_MUL];
    ALL_TAC] THEN
\end{verbatim}\end{scriptsize}

We then reshuffle the conjuncts to handle the $\tau^p$ clause first, assuming
the other two clauses:

\begin{scriptsize}\begin{verbatim}
  MATCH_MP_TAC(TAUT `(p /\ q ==> r) /\ p /\ q ==> p /\ q /\ r`) THEN
  CONJ_TAC THENL
   [DISCH_THEN(STRIP_ASSUME_TAC o GSYM) THEN
    ASM_REWRITE_TAC[REAL_ABS_MUL] THEN GEN_TAC THEN
    MATCH_MP_TAC REAL_LE_MUL2 THEN ASM_REWRITE_TAC[REAL_ABS_POS] THEN
    MATCH_MP_TAC(REAL_ARITH `abs(x) <= a ==> abs(&1 + x) <= &1 + a`) THEN
    TRANS_TAC REAL_LE_TRANS `(PSI:num->real#real->real) i (V,abs(Tl i))` THEN
    ASM_SIMP_TAC[] THEN ASM_MESON_TAC[REAL_ABS_POS];
    ALL_TAC] THEN
\end{verbatim}\end{scriptsize}

Now we begin the main inductive proof and dispose of the base case by simple
arithmetic:

\begin{scriptsize}\begin{verbatim}
  REWRITE_TAC[AND_FORALL_THM] THEN
  INDUCT_TAC THEN ASM_REWRITE_TAC[] THENL [ASM_REAL_ARITH_TAC; ALL_TAC] THEN
\end{verbatim}\end{scriptsize}

First we establish that the step case of the loop invariant holds

\begin{scriptsize}\begin{verbatim}
  CONJ_TAC THENL
   [ASM_REWRITE_TAC[ADD1] THEN
    SUBGOAL_THEN `&0 < beta (i + 1) /\ &0 < B i` MP_TAC THENL
     [ASM_REWRITE_TAC[]; CONV_TAC REAL_FIELD];
    ALL_TAC] THEN
\end{verbatim}\end{scriptsize}

\noindent after which we massage the goal a little and chain through the
inequalities, roughly following the paper proof:

\begin{scriptsize}\begin{verbatim}
  FIRST_X_ASSUM(CONJUNCTS_THEN (ASSUME_TAC o GSYM)) THEN
  REWRITE_TAC[ADD1] THEN

  TRANS_TAC REAL_LE_TRANS
   `abs(-- beta (i + 1)  * psi i (V:real,Tl i) * Tl i +
        (beta (i + 1) * Tp i - DSF (i + 1) (beta (i + 1) * Tp i)))` THEN
  CONJ_TAC THENL [ASM_REWRITE_TAC[] THEN REAL_ARITH_TAC; ALL_TAC] THEN

  TRANS_TAC REAL_LE_TRANS
   `beta (i + 1) * abs(psi i (V:real,Tl i)) * abs(Tl i) + omega(i + 1)` THEN
  CONJ_TAC THENL
   [MATCH_MP_TAC(REAL_ARITH
     `abs(x) <= a /\ abs(y) <= b ==> abs(x + y) <= a + b`) THEN
    ASM_SIMP_TAC[ARITH_RULE `1 <= i + 1`] THEN
    REWRITE_TAC[REAL_ABS_MUL; REAL_ABS_NEG] THEN
    ASM_SIMP_TAC[REAL_ARITH `&0 < x ==> abs x = x`; REAL_LE_REFL];
    ALL_TAC] THEN

  ASM_REWRITE_TAC[] THEN REWRITE_TAC[REAL_LE_RADD] THEN
  ASM_SIMP_TAC[REAL_LE_LMUL_EQ] THEN
  MATCH_MP_TAC REAL_LE_MUL2 THEN ASM_REWRITE_TAC[REAL_ABS_POS] THEN
  TRANS_TAC REAL_LE_TRANS `(PSI:num->real#real->real) i (V,abs(Tl i))` THEN
  ASM_SIMP_TAC[] THEN ASM_MESON_TAC[REAL_ABS_POS]);;
\end{verbatim}\end{scriptsize}

\subsection{Properties of posynomials}

The proof of corollary~\ref{cor:proxy_max} requires a notion corresponding to a restricted subset of the
posynomials, functions of $V$ consisting of finite sums of positive multiples
of integer powers of $V$, $\sum_1^k c_i V^{r_i}$. We render this in HOL Light
as follows (using the simple word `posynomial' is perhaps a little misleading
since these are a restricted case, but this is only a name):

\begin{scriptsize}\begin{verbatim}
let posynomial = new_definition
 `posynomial p <=>
  ?c (n:num->real) k.
        (!i. 1 <= i /\ i <= k ==> c i > &0 /\ integer(n i)) /\
        (!v. &0 < v ==> sum (1..k) (\i. c i * v rpow (n i)) = p v)`;;
\end{verbatim}\end{scriptsize}

We now proceed to prove various basic `closure' properties, roughly
corresponding to those mentioned in the text. The identically zero function is
a posynomial; even though the coefficients in the sum are assumed strictly
positive, we can take $k = 0$ and get an empty sum:

\begin{scriptsize}\begin{verbatim}
let POSYNOMIAL_0 = prove
 (`posynomial (\v. &0)`,
  REWRITE_TAC[posynomial] THEN
  MAP_EVERY EXISTS_TAC [`(\i. &1):num->real`; `(\i. &0):num->real`; `0`] THEN
  REWRITE_TAC[SUM_CLAUSES_NUMSEG] THEN ARITH_TAC);;
\end{verbatim}\end{scriptsize}

Similarly straightforwardly, the identically 1 function is also a posynomial:

\begin{scriptsize}\begin{verbatim}
let POSYNOMIAL_1 = prove
 (`posynomial (\v. &1)`,
  REWRITE_TAC[posynomial] THEN
  MAP_EVERY EXISTS_TAC [`(\i. &1):num->real`; `(\i. &0):num->real`; `1`] THEN
  REWRITE_TAC[INTEGER_CLOSED; SUM_SING_NUMSEG; RPOW_POW] THEN REAL_ARITH_TAC);;
\end{verbatim}\end{scriptsize}

\noindent and indeed if $p$ is a posynomial, so is any positive multiple of it

\begin{scriptsize}\begin{verbatim}
let POSYNOMIAL_CMUL = prove
 (`!p c. posynomial p /\ &0 < c ==> posynomial(\v. c * p(v))`,
  REPEAT GEN_TAC THEN
  DISCH_THEN(CONJUNCTS_THEN2 MP_TAC ASSUME_TAC) THEN
  REWRITE_TAC[posynomial] THEN DISCH_THEN(X_CHOOSE_THEN `d:num->real`
   (fun th -> EXISTS_TAC `(\i. c * d i):num->real` THEN MP_TAC th)) THEN
  REPEAT(MATCH_MP_TAC MONO_EXISTS THEN GEN_TAC) THEN
  SIMP_TAC[SUM_LMUL; GSYM REAL_MUL_ASSOC] THEN
  ASM_SIMP_TAC[real_gt; REAL_LT_MUL]);;
\end{verbatim}\end{scriptsize}

It is in fact convenient to record that any nonnegative constant function is a
posynomial

\begin{scriptsize}\begin{verbatim}
let POSYNOMIAL_CONST = prove
 (`!c. &0 <= c ==> posynomial (\v. c)`,
  REWRITE_TAC[REAL_ARITH `&0 <= c <=> c = &0 \/ &0 < c`] THEN
  REPEAT STRIP_TAC THEN ASM_REWRITE_TAC[POSYNOMIAL_0] THEN
  GEN_REWRITE_TAC (RAND_CONV o ABS_CONV) [GSYM REAL_MUL_RID] THEN
  MATCH_MP_TAC POSYNOMIAL_CMUL THEN
  ASM_REWRITE_TAC[POSYNOMIAL_1]);;
\end{verbatim}\end{scriptsize}

We next observe that multiplying a posynomial by an integer power of the
variable again gives a posynomial:

\begin{scriptsize}\begin{verbatim}
let POSYNOMIAL_VPOWMUL = prove
 (`!p n. posynomial p /\ integer n ==> posynomial(\v. p(v) * v rpow n)`,
  REPEAT GEN_TAC THEN DISCH_THEN(CONJUNCTS_THEN2 MP_TAC ASSUME_TAC) THEN
  REWRITE_TAC[posynomial] THEN
  MATCH_MP_TAC MONO_EXISTS THEN X_GEN_TAC `c:num->real` THEN
  GEN_REWRITE_TAC BINOP_CONV [SWAP_EXISTS_THM] THEN
  MATCH_MP_TAC MONO_EXISTS THEN X_GEN_TAC `k:num` THEN
  DISCH_THEN(X_CHOOSE_THEN `nn:num->real` STRIP_ASSUME_TAC) THEN
  EXISTS_TAC `(\i. nn i + n):num->real` THEN
  ASM_SIMP_TAC[RPOW_ADD; REAL_MUL_ASSOC; SUM_RMUL; INTEGER_CLOSED]);;
\end{verbatim}\end{scriptsize}

This yields other basic closure properties as special cases: multiplying by $V$
and dividing by $V$:

\begin{scriptsize}\begin{verbatim}
let POSYNOMIAL_VMUL = prove
 (`!p. posynomial p ==> posynomial(\v. p(v) * v)`,
  REPEAT STRIP_TAC THEN
  MP_TAC(ISPECL [`p:real->real`; `&1:real`] POSYNOMIAL_VPOWMUL) THEN
  ASM_REWRITE_TAC[RPOW_POW; REAL_POW_1; INTEGER_CLOSED]);;

let POSYNOMIAL_VDIV = prove
 (`!p. posynomial p ==> posynomial(\v. p(v) / v)`,
  REPEAT STRIP_TAC THEN
  MP_TAC(ISPECL [`p:real->real`; `-- &1:real`] POSYNOMIAL_VPOWMUL) THEN
  ASM_SIMP_TAC[RPOW_POW; real_div; RPOW_NEG; REAL_POW_1; INTEGER_CLOSED]);;
\end{verbatim}\end{scriptsize}

We can also trivially derive that the identity function is a posynomial:

\begin{scriptsize}\begin{verbatim}
let POSYNOMIAL_V = prove
 (`posynomial(\v. v)`,
  GEN_REWRITE_TAC (RAND_CONV o ABS_CONV) [GSYM REAL_MUL_LID] THEN
  MATCH_MP_TAC POSYNOMIAL_VMUL THEN REWRITE_TAC[POSYNOMIAL_1]);;
\end{verbatim}\end{scriptsize}

Slightly more involved is the fact that the sum of posynomials is a posynomial;
note that following the strict form of the definition we need to plug two
summations $1 \ldots n_1$ and $1 \ldots n_2$ into a single summation $1 \ldots
n_1 + n_2$ with some straightforward but fiddly reindexing:

\begin{scriptsize}\begin{verbatim}
let POSYNOMIAL_ADD = prove
 (`!p q. posynomial p /\ posynomial q ==> posynomial(\v. p v + q v)`,
  REPEAT GEN_TAC THEN
  REWRITE_TAC[posynomial; IMP_CONJ; LEFT_IMP_EXISTS_THM] THEN
  MAP_EVERY X_GEN_TAC [`c1:num->real`; `n1:num->real`; `m:num`] THEN
  DISCH_TAC THEN DISCH_TAC THEN
  MAP_EVERY X_GEN_TAC [`c2:num->real`; `n2:num->real`; `n:num`] THEN
  DISCH_TAC THEN DISCH_TAC THEN
  EXISTS_TAC `\i. if i <= m then (c1:num->real) i else c2 (i - m)` THEN
  EXISTS_TAC `\i. if i <= m then (n1:num->real) i else n2 (i - m)` THEN
  EXISTS_TAC `m + n:num` THEN REWRITE_TAC[] THEN CONJ_TAC THENL
   [REPEAT STRIP_TAC THEN COND_CASES_TAC THEN ASM_SIMP_TAC[] THEN
    ASM_MESON_TAC[ARITH_RULE
     `~(i:num <= m) /\ i <= m + n ==> 1 <= i - m /\ i - m <= n`];
    REPEAT STRIP_TAC THEN ONCE_REWRITE_TAC[COND_RAND] THEN
    ONCE_REWRITE_TAC[MESON[] `(if p then f else g) (if p then x else y) =
        if p then f x else g y`] THEN
    SIMP_TAC[SUM_CASES; FINITE_NUMSEG; IN_NUMSEG;
      ARITH_RULE `(1 <= i /\ i <= m + n) /\ i <= m <=> 1 <= i /\ i <= m`;
      ARITH_RULE `(1 <= i /\ i <= m + n) /\ ~(i <= m) <=>
                  1 + m <= i /\ i <= n + m`] THEN
    REWRITE_TAC[GSYM numseg; SUM_OFFSET; ADD_SUB] THEN ASM_SIMP_TAC[]]);;
\end{verbatim}\end{scriptsize}

Now by induction we can establish that a finite sum of posynomials (based on
some arbitrary indexing set $k$) is a posynomial:

\begin{scriptsize}\begin{verbatim}
let POSYNOMIAL_SUM = prove
 (`!k:A->bool p.
        FINITE k /\ (!i. i IN k ==> posynomial(\v. p v i))
        ==> posynomial (\v. sum k (p v))`,
  REWRITE_TAC[IMP_CONJ; RIGHT_FORALL_IMP_THM] THEN
  MATCH_MP_TAC FINITE_INDUCT_STRONG THEN
  SIMP_TAC[SUM_CLAUSES; POSYNOMIAL_0; POSYNOMIAL_ADD; FORALL_IN_INSERT;
            ETA_AX]);;
\end{verbatim}\end{scriptsize}

This yields without too much trouble the fact that the product of posynomials
is a posynomial, simply by expanding the product of sums into a single sum over
the Cartesian product of the indexing set (using HOL Light's standard theorem
{\verb|SUM_SUM_PRODUCT|}) and appealing to the just-proved
{\verb|POSYNOMIAL_SUM|}:

\begin{scriptsize}\begin{verbatim}
let POSYNOMIAL_MUL = prove
 (`!p q. posynomial p /\ posynomial q ==> posynomial(\v. p v * q v)`,
  REPEAT GEN_TAC THEN GEN_REWRITE_TAC (LAND_CONV o BINOP_CONV)
   [CONV_RULE (RAND_CONV(ONCE_DEPTH_CONV SYM_CONV)) (SPEC_ALL posynomial)] THEN
  STRIP_TAC THEN ASM_SIMP_TAC[posynomial] THEN
  REWRITE_TAC[GSYM posynomial] THEN
  SIMP_TAC[SUM_SUM_PRODUCT; FINITE_NUMSEG; REAL_MUL_SUM] THEN
  MATCH_MP_TAC POSYNOMIAL_SUM THEN
  SIMP_TAC[FINITE_PRODUCT_DEPENDENT; FINITE_NUMSEG; FORALL_IN_GSPEC] THEN
  REWRITE_TAC[IN_NUMSEG] THEN REPEAT STRIP_TAC THEN
  ONCE_REWRITE_TAC[REAL_ARITH
   `(c * x) * (d * y):real = (c * d) * (x * y)`] THEN
  SIMP_TAC[posynomial; GSYM RPOW_ADD] THEN REWRITE_TAC[GSYM posynomial] THEN
  MATCH_MP_TAC POSYNOMIAL_VPOWMUL THEN ASM_SIMP_TAC[INTEGER_CLOSED] THEN
  ONCE_REWRITE_TAC[GSYM REAL_MUL_RID] THEN
  RULE_ASSUM_TAC(REWRITE_RULE[real_gt]) THEN
  MATCH_MP_TAC POSYNOMIAL_CMUL THEN
  ASM_SIMP_TAC[REAL_LT_MUL; POSYNOMIAL_1]);;
\end{verbatim}\end{scriptsize}

Finally, we prove that each posynomial defines a convex function on the
positive reals. (For more on convex functions see any standard book on
convexity, e.g. \cite{barvinok-convexity} or \cite{webster-convexity}.)

\begin{scriptsize}\begin{verbatim}
let REAL_CONVEX_ON_POSYNOMIAL = prove
 (`!p. posynomial p ==> p real_convex_on {x | x > &0}`,
  GEN_TAC THEN REWRITE_TAC[posynomial; LEFT_IMP_EXISTS_THM; real_gt] THEN
  MAP_EVERY X_GEN_TAC [`c:num->real`; `n:num->real`; `m:num`] THEN
  DISCH_THEN(CONJUNCTS_THEN2 ASSUME_TAC MP_TAC) THEN
  GEN_REWRITE_TAC (LAND_CONV o ONCE_DEPTH_CONV)
   [SET_RULE `&0 < v <=> v IN {x | &0 < x}`] THEN
  MATCH_MP_TAC(MESON[REAL_CONVEX_ON_EQ]
   `is_realinterval s /\ f real_convex_on s
    ==> (!x. x IN s ==> f x = g x) ==> g real_convex_on s`) THEN
  REWRITE_TAC[IS_REALINTERVAL_CLAUSES] THEN
  MATCH_MP_TAC REAL_CONVEX_ON_SUM THEN
  REWRITE_TAC[FINITE_NUMSEG; IN_NUMSEG] THEN
  X_GEN_TAC `i:num` THEN STRIP_TAC THEN MATCH_MP_TAC REAL_CONVEX_LMUL THEN
  ASM_SIMP_TAC[REAL_LT_IMP_LE] THEN
  MATCH_MP_TAC REAL_CONVEX_ON_RPOW_INTEGER THEN
  ASM SET_TAC[]);;
\end{verbatim}\end{scriptsize}

\subsection{Corollary~\ref{cor:proxy_max}}

We can now establish the corollary:

\begin{scriptsize}\begin{verbatim}
let COROLLARY_V_3 = prove
 (`!(V:real) (beta:num->real) (omega:num->real) (DSF:num->real->real)
    (B:num->real) (H:num->real) (v:num->real) (Tl:num->real) (Tp:num->real)
    (PSI:num->real#real->real) (psi:num->real#real->real)
    (tau:num->real->real) (taup:num->real->real).

        // Environmental assumptions including nondecreasing property
        &0 < V /\
        (!i. i >= 0 ==> beta i > &0) /\
        (!i. i >= 1 ==> (!x. abs (x - DSF i x) <= omega i)) /\
        (!i. i >= 0 ==> abs (psi i (V,Tl i)) <= PSI i (V,abs(Tl i))) /\
        (!i x y. &0 <= x /\ x <= y ==> PSI i (V,x) <= PSI i (V,y)) /\

        (!u. tau 0 u = u) /\
        (!i u. tau (i + 1) u =
               beta (i + 1) * PSI i (u,tau i u) * tau i u + omega (i + 1)) /\
        (!i u. taup i u = (&1 + PSI i (u,tau i u)) * tau i u) /\

        // Computing recursively
        B 0 = &1 /\ H 0 = &0 /\ Tl 0 = V /\

        (!i. Tp i = (&1 + psi i (V,Tl i)) * Tl i) /\
        (!i. v (i + 1) = DSF (i + 1) (beta (i + 1) * Tp i)) /\
        (!i. B (i + 1) = beta (i + 1) * B i) /\
        (!i. H (i + 1) = H i + v (i + 1) / B (i + 1)) /\
        (!i. Tl (i + 1) = beta (i + 1) * Tl i - v (i + 1)) /\

        // The extra posynomial-related assumption
        (!i p. i >= 0 /\ posynomial p
               ==> posynomial (\v. PSI i (v,p v)))

        // Hence conclude our bounds
        ==> !a b. real_interval[a,b] SUBSET {x | x > &0}
                  ==> !i u. u IN real_interval[a,b]
                            ==> tau i u <= max (tau i a) (tau i b) /\
                                taup i u <= max (taup i a) (taup i b)`,
\end{verbatim}\end{scriptsize}

\noindent by combining the original proxy theorem with some basic properties of
posynomials. After some initial breakdown of the goal, also standardizing
inequalities by writing $s > t$ as $t < s$ and so on, we make the trivial
deduction $0 \leq V$ from the assumption $0 < V$ (to settle this in the
hypotheses once and for all for convenient use without explicit mention):

\begin{scriptsize}\begin{verbatim}
  REWRITE_TAC[real_gt; real_ge; GT; GE; LE_0] THEN
  REPEAT GEN_TAC THEN STRIP_TAC THEN
  FIRST_ASSUM(ASSUME_TAC o MATCH_MP REAL_LT_IMP_LE) THEN
  REPEAT GEN_TAC THEN DISCH_TAC THEN
\end{verbatim}\end{scriptsize}

\noindent we first prove that each $\tau_i$ defines a posynomial, by induction:

\begin{scriptsize}\begin{verbatim}
  SUBGOAL_THEN `!i:num. posynomial (tau i)` ASSUME_TAC THENL
   [INDUCT_TAC THEN GEN_REWRITE_TAC RAND_CONV [GSYM ETA_AX] THEN
    ASM_REWRITE_TAC[ADD1; POSYNOMIAL_V] THEN
    MATCH_MP_TAC POSYNOMIAL_ADD THEN CONJ_TAC THENL
     [MATCH_MP_TAC POSYNOMIAL_CMUL THEN ASM_REWRITE_TAC[] THEN
      MATCH_MP_TAC POSYNOMIAL_MUL THEN ASM_SIMP_TAC[ETA_AX];
      MATCH_MP_TAC POSYNOMIAL_CONST THEN
      ASM_MESON_TAC[REAL_LE_TRANS; REAL_ABS_POS; ARITH_RULE `1 <= i + 1`]];
    ALL_TAC] THEN
\end{verbatim}\end{scriptsize}

\noindent and then, using that as a lemma, that the same is true of $\tau^p_i$:

\begin{scriptsize}\begin{verbatim}
  SUBGOAL_THEN `!i:num. posynomial (taup i)` ASSUME_TAC THENL
   [INDUCT_TAC THEN GEN_REWRITE_TAC RAND_CONV [GSYM ETA_AX] THEN
    REWRITE_TAC[ADD1] THEN ONCE_ASM_REWRITE_TAC[] THEN
    MATCH_MP_TAC POSYNOMIAL_MUL THEN REWRITE_TAC[ETA_AX] THEN
    (CONJ_TAC THENL [ALL_TAC; FIRST_X_ASSUM MATCH_ACCEPT_TAC]) THEN
    MATCH_MP_TAC POSYNOMIAL_ADD THEN REWRITE_TAC[POSYNOMIAL_1] THEN
    FIRST_X_ASSUM MATCH_MP_TAC THEN REWRITE_TAC[ETA_AX] THEN
    FIRST_X_ASSUM MATCH_ACCEPT_TAC;
    ALL_TAC] THEN
\end{verbatim}\end{scriptsize}

The result then follows by appealing to a general bound property that the upper
bound of a convex function on a real interval is attained at one of the
endpoints ({\verb|REAL_CONVEX_LOWER_REAL_INTERVAL|}) and the fact that
posynomials are convex functions {\verb|REAL_CONVEX_ON_POSYNOMIAL|} proved at
the end of the previous section:

\begin{scriptsize}\begin{verbatim}
  REPEAT STRIP_TAC THEN
  MATCH_MP_TAC REAL_CONVEX_LOWER_REAL_INTERVAL THEN
  ASM_REWRITE_TAC[] THEN
  FIRST_X_ASSUM(MATCH_MP_TAC o MATCH_MP (REWRITE_RULE[IMP_CONJ_ALT]
        REAL_CONVEX_ON_SUBSET)) THEN
  REWRITE_TAC[GSYM real_gt] THEN MATCH_MP_TAC REAL_CONVEX_ON_POSYNOMIAL THEN
  FIRST_X_ASSUM MATCH_ACCEPT_TAC);;
\end{verbatim}\end{scriptsize}

Before proceeding, for convenience, we collect together a `kitchen sink'
version of the main proxy theorem and corollary together:

\begin{scriptsize}\begin{verbatim}
let FULL_COROLLARY = prove
 (`!(V:real) (beta:num->real) (omega:num->real) (DSF:num->real->real)
    (B:num->real) (H:num->real) (v:num->real) (Tl:num->real) (Tp:num->real)
    (PSI:num->real#real->real) (psi:num->real#real->real)
    (tau:num->real->real) (taup:num->real->real).

        // Environmental assumptions including nondecreasing property
        &0 < V /\
        (!i. i >= 0 ==> beta i > &0) /\
        (!i. i >= 1 ==> (!x. abs (x - DSF i x) <= omega i)) /\
        (!i. i >= 0 ==> abs (psi i (V,Tl i)) <= PSI i (V,abs(Tl i))) /\
        (!i x y. &0 <= x /\ x <= y ==> PSI i (V,x) <= PSI i (V,y)) /\

        (!u. tau 0 u = u) /\
        (!i u. tau (i + 1) u =
               beta (i + 1) * PSI i (u,tau i u) * tau i u + omega (i + 1)) /\
        (!i u. taup i u = (&1 + PSI i (u,tau i u)) * tau i u) /\

        // Computing recursively
        B 0 = &1 /\ H 0 = &0 /\ Tl 0 = V /\

        (!i. Tp i = (&1 + psi i (V,Tl i)) * Tl i) /\
        (!i. v (i + 1) = DSF (i + 1) (beta (i + 1) * Tp i)) /\
        (!i. B (i + 1) = beta (i + 1) * B i) /\
        (!i. H (i + 1) = H i + v (i + 1) / B (i + 1)) /\
        (!i. Tl (i + 1) = beta (i + 1) * Tl i - v (i + 1)) /\

        // The extra posynomial-related assumption
        (!i p. i >= 0 /\ posynomial p
               ==> posynomial (\v. PSI i (v,p v)))

        // Hence conclude invariant and all bounds.
        ==> (!i. V = H i + Tl i / B i) /\
            (!i. abs(Tl i) <= tau i V) /\
            (!i. abs(Tp i) <= taup i V) /\
            (!a b. real_interval[a,b] SUBSET {x | x > &0}
                   ==> !i u. u IN real_interval[a,b]
                             ==> tau i u <= max (tau i a) (tau i b) /\
                                 taup i u <= max (taup i a) (taup i b))`,
\end{verbatim}\end{scriptsize}

The proof is just a trivial if mildly tedious instantiation of earlier results;
this could have been done in one piece at the outset, but we preserved the
separate results from the earlier development:

\begin{scriptsize}\begin{verbatim}
  REWRITE_TAC[real_gt; real_ge; GT; GE; LE_0] THEN
  REPEAT GEN_TAC THEN STRIP_TAC THEN
  FIRST_ASSUM(ASSUME_TAC o MATCH_MP REAL_LT_IMP_LE) THEN
  ONCE_REWRITE_TAC[TAUT `p /\ q /\ r /\ s <=> (p /\ q /\ r) /\ s`] THEN
  CONJ_TAC THENL
   [MATCH_MP_TAC(REWRITE_RULE[GE; LE_0] THEOREM_V_1) THEN
    MAP_EVERY EXISTS_TAC
     [`beta:num->real`; `omega:num->real`; `DSF:num->real->real`;
      `v:num->real`; `PSI:num->real#real->real`;
      `psi:num->real#real->real`] THEN
    ASM_REWRITE_TAC[real_gt];

    MATCH_MP_TAC(REWRITE_RULE[real_gt] COROLLARY_V_3) THEN
    MAP_EVERY EXISTS_TAC
     [`V:real`; `beta:num->real`; `omega:num->real`; `DSF:num->real->real`;
      `B:num->real`; `H:num->real`; `v:num->real`; `Tl:num->real`;
      `Tp:num->real`;
      `PSI:num->real#real->real`; `psi:num->real#real->real`] THEN
    ASM_REWRITE_TAC[GE; LE_0]]);;
\end{verbatim}\end{scriptsize}

\subsection{Instantiation to division (Section~\ref{sec:dsm_division})}

We next proceed with the instantiation to the special cases of division:

\begin{scriptsize}\begin{verbatim}
let BOUND_THEOREM_DIV = prove
 (`!beta Sigma omega B DSF H R Tp X Y g sigma v.
        (!i. i >= 0 ==> beta i > &0) /\
        &1 / &2 <= X /\ X < &1 /\
        &1 <= Y /\ Y < &2 /\
        (!y. &1 <= y /\ y < &2
             ==> g y = (&1 + sigma y) / y /\ abs(sigma y) <= Sigma) /\
        (!i. i >= 1 ==> (!x. abs (x - DSF i x) <= omega i)) /\
        B 0 = &1 /\ H 0 = &0 /\ R 0 = X /\
        (!i. Tp i = g(Y) * R i) /\
        (!i. v (i + 1) = DSF (i + 1) (beta (i + 1) * Tp i)) /\
        (!i. B (i + 1) = beta (i + 1) * B i) /\
        (!i. H (i + 1) = H i + v (i + 1) / B (i + 1)) /\
        (!i. R (i + 1) = beta (i + 1) * R i -  v(i + 1) * Y)
        ==> ?tau. (!u. tau 0 u = u) /\
                  (!i u. tau (i + 1) u =
                         beta (i + 1) * Sigma * tau i u + omega (i + 1)) /\
                  (!i. abs(X / Y - H i)
                       <= max (tau i (&1 / &4)) (tau i (&1)) / B i)`,
\end{verbatim}\end{scriptsize}

We begin by establishing a few obvious facts that we want to avoid re-proving
later such as $0 < B_i$, and deducing that there are indeed functions $\tau$
and $T$ satisfying the recursion equations in the proxy theorem:

\begin{scriptsize}\begin{verbatim}
  REPEAT GEN_TAC THEN REWRITE_TAC[GE; LE_0; real_gt] THEN STRIP_TAC THEN
  SUBGOAL_THEN `&0 <= Sigma` ASSUME_TAC THENL
   [FIRST_X_ASSUM(MP_TAC o SPEC `&1:real`) THEN REAL_ARITH_TAC;
    ALL_TAC] THEN
  SUBGOAL_THEN `!i. &0 < (B:num->real) i` ASSUME_TAC THENL
   [INDUCT_TAC THEN ASM_SIMP_TAC[REAL_LT_MUL; ADD1; REAL_LT_01]; ALL_TAC] THEN
  SUBGOAL_THEN `&0 < X /\ &0 < Y` STRIP_ASSUME_TAC THENL
   [ASM_REAL_ARITH_TAC; ALL_TAC] THEN
  SUBGOAL_THEN `&0 < X / Y` ASSUME_TAC THENL
   [ASM_MESON_TAC[REAL_LT_DIV]; ALL_TAC] THEN
  MAP_EVERY ABBREV_TAC
   [`PSI:num->real#real->real = \i (u,t). Sigma`;
    `psi:num->real#real->real = \i (u,t). sigma(Y:real)`] THEN
  (X_CHOOSE_THEN `tau:num->real->real`
    (STRIP_ASSUME_TAC o REWRITE_RULE[ADD1]) o
   prove_recursive_functions_exist num_RECURSION)
    `(!u:real. tau 0 u = u) /\
     (!i u. tau (SUC i) u =
            beta (i + 1) * PSI i (u,tau i u) * tau i u + omega (i + 1))` THEN
  (X_CHOOSE_THEN `Tl:num->real`
    (STRIP_ASSUME_TAC o REWRITE_RULE[ADD1]) o
   prove_recursive_functions_exist num_RECURSION)
    `Tl 0 :real = X / Y /\
    !i. Tl (SUC i) = beta (i + 1) * Tl i - v (i + 1)` THEN
  ABBREV_TAC
   `taup:num->real->real = \i u. (&1 + PSI i (u,tau i u)) * tau i u` THEN
\end{verbatim}\end{scriptsize}

We then simply instantiate the proxy theorem/corollary appropriately:

\begin{scriptsize}\begin{verbatim}
  MP_TAC(ISPECL
   [`X / Y:real`;
    `beta:num->real`;
    `omega:num->real`;
    `DSF:num->real->real`;
    `B:num->real`;
    `H:num->real`;
    `v:num->real`;
    `Tl:num->real`;
    `Tp:num->real`;
    `PSI:num->real#real->real`;
    `psi:num->real#real->real`;
    `tau:num->real->real`;
    `taup:num->real->real`]
    FULL_COROLLARY) THEN
\end{verbatim}\end{scriptsize}

Now after some trivial cleanup and splitting

\begin{scriptsize}\begin{verbatim}
  REWRITE_TAC[GE; LE_0; real_gt] THEN ANTS_TAC THENL
\end{verbatim}\end{scriptsize}

\noindent we first need to verify the various hypotheses of the proxy theorem
and corollary. In all we get 17(!) of them. However, it turns out that
most have trivial one-line
proofs like {\verb|FIRST_X_ASSUM MATCH_ACCEPT_TAC|}. The only one with a little
content is proving that {\verb|!i. Tp i = (&1 + psi i (X / Y,Tl i)) * Tl i|}.
After a little initial rearrangement this devolves to proving
{\verb|!j. R j / Y = Tl j|}, which is done by an easy induction (this
corresponds to verifying the equivalence of $R$ and $\tilde{R}$ in the text).
Now we have the conclusions from the main theorem/corollary and we do some
instantiation, in particular setting the endpoints of the interval for which
the bound is derived, and hence derive our result:

\begin{scriptsize}\begin{verbatim}
    STRIP_TAC THEN EXISTS_TAC `tau:num->real->real` THEN
    ASM_REWRITE_TAC[REAL_ADD_SUB] THEN CONJ_TAC THENL
     [EXPAND_TAC "PSI" THEN REWRITE_TAC[]; ALL_TAC] THEN
    ASM_SIMP_TAC[REAL_ABS_DIV; REAL_LE_DIV2_EQ;
                 REAL_ARITH `&0 < b ==> abs b = b`] THEN
    X_GEN_TAC `i:num` THEN
    FIRST_X_ASSUM(MP_TAC o SPECL [`&1 / &4`; `&1`]) THEN
    REWRITE_TAC[SUBSET; IN_REAL_INTERVAL; IN_ELIM_THM] THEN
    ANTS_TAC THENL [REAL_ARITH_TAC; ALL_TAC] THEN
    DISCH_THEN(MP_TAC o SPECL [`i:num`; `X / Y:real`]) THEN
    ANTS_TAC THENL [ALL_TAC; ASM_MESON_TAC[REAL_LE_TRANS]] THEN
    REWRITE_TAC[REAL_ARITH
     `&1 / &4 <= X / Y /\ X / Y <= &1 <=>
      &1 / &2 * inv(&2) <= X * inv Y /\ X * inv Y <= &1 * inv(&1)`] THEN
    CONJ_TAC THEN MATCH_MP_TAC REAL_LE_MUL2 THEN REPEAT CONJ_TAC THEN
    TRY(MATCH_MP_TAC REAL_LE_INV2) THEN
    REWRITE_TAC[REAL_LE_INV_EQ] THEN ASM_REAL_ARITH_TAC]);;
\end{verbatim}\end{scriptsize}

\subsection{Instantiation to square root (Section~\ref{sec:dsm_squareroot})}

This is conceptually the same as the instantiation to division, but various
terms become more involved and as a result the proof becomes a bit more
complicated too.

\begin{scriptsize}\begin{verbatim}
let BOUND_THEOREM_SQRT = prove
 (`!beta Sigma omega B DSF H R Tp X g sigma v.
        (!i. i >= 0 ==> beta i > &0) /\
        &1 / &4 <= X /\ X < &1 /\
        (!x. &1 / &4 <= x /\ x < &1
             ==> g x = (&1 + sigma x) / sqrt x /\
                 abs(sigma x) <= Sigma) /\
        (!i. i >= 1 ==> (!x. abs (x - DSF i x) <= omega i)) /\
        B 0 = &1 /\ H 0 = &0 /\ R 0 = X / &2 /\
        (!i. Tp i = (if i = 0 then &2 else &1) * g(X) * R i) /\
        (!i. v (i + 1) = DSF (i + 1) (beta (i + 1) * Tp i)) /\
        (!i. B (i + 1) = beta (i + 1) * B i) /\
        (!i. H (i + 1) = H i + v (i + 1) / B (i + 1)) /\
        (!i. R (i + 1) =
             beta (i + 1) * R i -  v(i + 1) * (H(i + 1) + H i) / &2)
        ==> ?tau.
                  (!u. tau 0 u = u) /\
                  (!i u. tau (i + 1) u =
                         beta (i + 1) *
                         (if i = 0 then Sigma
                          else Sigma + (&1 + Sigma) * tau i u / (&2 * u * B i))
                          * tau i u +
                         omega (i + 1)) /\
                  (!i. abs(sqrt X - H i)
                       <= max (tau i (&1 / &2)) (tau i (&1)) / B i)`,
\end{verbatim}\end{scriptsize}

As before we start by establishing some basic lemmas and the existence of
recursively defined functions:

\begin{scriptsize}\begin{verbatim}
  REPEAT GEN_TAC THEN REWRITE_TAC[GE; LE_0; real_gt] THEN STRIP_TAC THEN
  SUBGOAL_THEN `&0 <= Sigma` ASSUME_TAC THENL
   [FIRST_X_ASSUM(MP_TAC o SPEC `&1 / &2`) THEN REAL_ARITH_TAC;
    ALL_TAC] THEN
  SUBGOAL_THEN `!i. &0 < (B:num->real) i` ASSUME_TAC THENL
   [INDUCT_TAC THEN ASM_SIMP_TAC[REAL_LT_MUL; ADD1; REAL_LT_01]; ALL_TAC] THEN
  SUBGOAL_THEN `&0 < X` ASSUME_TAC THENL
   [ASM_REAL_ARITH_TAC; ALL_TAC] THEN
  SUBGOAL_THEN `&0 < sqrt X` ASSUME_TAC THENL
   [ASM_MESON_TAC[SQRT_POS_LT]; ALL_TAC] THEN
  MAP_EVERY ABBREV_TAC
   [`PSI:num->real#real->real = \i (u,t).
        if i = 0 then Sigma
        else Sigma + (&1 + Sigma) * t / (&2 * u * B i)`;
    `psi:num->real#real->real = \i (u,t).
        if i = 0 then sigma(X)
        else (&1 + sigma(X:real)) * (&1 - t / (&2 * u * B i)) - &1`] THEN
  (X_CHOOSE_THEN `tau:num->real->real`
    (STRIP_ASSUME_TAC o REWRITE_RULE[ADD1]) o
   prove_recursive_functions_exist num_RECURSION)
    `(!u:real. tau 0 u = u) /\
     (!i u. tau (SUC i) u =
            beta (i + 1) * PSI i (u,tau i u) * tau i u + omega (i + 1))` THEN
  (X_CHOOSE_THEN `Tl:num->real`
    (STRIP_ASSUME_TAC o REWRITE_RULE[ADD1]) o
   prove_recursive_functions_exist num_RECURSION)
    `Tl 0 = sqrt(X) /\
    !i. Tl (SUC i) = beta (i + 1) * Tl i - v (i + 1)` THEN
  ABBREV_TAC
   `taup:num->real->real = \i u. (&1 + PSI i (u,tau i u)) * tau i u` THEN
\end{verbatim}\end{scriptsize}

\noindent and then instantiate the proxy theorem/corollary:

\begin{scriptsize}\begin{verbatim}
  MP_TAC(ISPECL
   [`sqrt X`;
    `beta:num->real`;
    `omega:num->real`;
    `DSF:num->real->real`;
    `B:num->real`;
    `H:num->real`;
    `v:num->real`;
    `Tl:num->real`;
    `Tp:num->real`;
    `PSI:num->real#real->real`;
    `psi:num->real#real->real`;
    `tau:num->real->real`;
    `taup:num->real->real`]
    FULL_COROLLARY) THEN
  REWRITE_TAC[GE; LE_0; real_gt] THEN ANTS_TAC THENL
\end{verbatim}\end{scriptsize}

The establishment of the hypotheses is now more complicated, mainly because of
the more intricate proof that $R = \tilde{R}$.

\begin{scriptsize}\begin{verbatim}
   [REPEAT CONJ_TAC THENL
     [FIRST_X_ASSUM MATCH_ACCEPT_TAC;
      FIRST_X_ASSUM MATCH_ACCEPT_TAC;
      FIRST_X_ASSUM MATCH_ACCEPT_TAC;
      X_GEN_TAC `i:num` THEN MAP_EVERY EXPAND_TAC ["PSI"; "psi"] THEN
      REWRITE_TAC[] THEN ASM_CASES_TAC `i = 0` THEN ASM_SIMP_TAC[] THEN
      MATCH_MP_TAC(REAL_ARITH
       `abs x <= a /\ abs((&1 + x) * y) <= b
        ==> abs((&1 + x) * (&1 - y) - &1) <= a + b`) THEN
      ASM_SIMP_TAC[REAL_ABS_MUL] THEN
      MATCH_MP_TAC REAL_LE_MUL2 THEN REWRITE_TAC[REAL_ABS_POS] THEN
      ASM_SIMP_TAC[REAL_ARITH `abs x <= a ==> abs(&1 + x) <= &1 + a`] THEN
      REWRITE_TAC[REAL_ABS_DIV] THEN MATCH_MP_TAC REAL_EQ_IMP_LE THEN
      AP_TERM_TAC THEN
      MATCH_MP_TAC(REAL_ARITH `&0 < x ==> abs(&2 * x) = &2 * x`) THEN
      MATCH_MP_TAC REAL_LT_MUL THEN ASM_REWRITE_TAC[];
      MAP_EVERY X_GEN_TAC [`i:num`; `x:real`; `y:real`] THEN STRIP_TAC THEN
      EXPAND_TAC "PSI" THEN REWRITE_TAC[] THEN
      COND_CASES_TAC THEN ASM_REWRITE_TAC[REAL_LE_REFL; REAL_LE_LADD] THEN
      ASM_SIMP_TAC[REAL_LE_LADD; REAL_LE_LMUL_EQ; REAL_LE_DIV2_EQ;
                   REAL_ARITH `&0 <= s ==> &0 < &1 + s`; REAL_LT_MUL;
                   REAL_ARITH `&0 < &2 * x <=> &0 < x`] THEN
      REAL_ARITH_TAC;
      FIRST_X_ASSUM MATCH_ACCEPT_TAC;
      ASM_REWRITE_TAC[] THEN NO_TAC;
      EXPAND_TAC "taup" THEN REWRITE_TAC[] THEN NO_TAC;
      FIRST_X_ASSUM MATCH_ACCEPT_TAC;
      FIRST_X_ASSUM MATCH_ACCEPT_TAC;
      FIRST_X_ASSUM MATCH_ACCEPT_TAC;
      X_GEN_TAC `i:num` THEN
      FIRST_X_ASSUM(fun th -> GEN_REWRITE_TAC LAND_CONV [th]) THEN
      EXPAND_TAC "psi" THEN REWRITE_TAC[] THEN
      ASM_CASES_TAC `i = 0` THEN ASM_REWRITE_TAC[] THENL
       [ASM_SIMP_TAC[REAL_DIV_SQRT; REAL_LT_IMP_LE; REAL_ARITH
         `&2 * c / s * x / &2 = c * x / s`];
        ALL_TAC] THEN
      REWRITE_TAC[REAL_MUL_LID; REAL_ARITH `&1 + x - &1 = x`] THEN
      ASM_SIMP_TAC[] THEN REWRITE_TAC[real_div; GSYM REAL_MUL_ASSOC] THEN
      AP_TERM_TAC THEN MATCH_MP_TAC(REAL_FIELD
       `&0 < b /\ &0 < s /\ r = (s - t / b / &2) * t
        ==> inv s * r = (&1 - t * inv(&2 * s * b)) * t`) THEN
      ASM_REWRITE_TAC[] THEN
      SUBGOAL_THEN `!j:num. Tl j / B j = sqrt X - H j`
      ASSUME_TAC THENL
       [INDUCT_TAC THEN ASM_REWRITE_TAC[REAL_SUB_RZERO; REAL_DIV_1; ADD1] THEN
        UNDISCH_TAC `Tl(j:num) / B j = sqrt X - H j` THEN
        SUBGOAL_THEN `&0 < beta(j + 1) /\ &0 < B j` MP_TAC THENL
         [ASM_REWRITE_TAC[]; CONV_TAC REAL_FIELD];
        ASM_REWRITE_TAC[REAL_ARITH `s - (s - h) / &2 = (s + h) / &2`]] THEN
      MATCH_MP_TAC(REAL_FIELD
       `!b. &0 < b /\ x / b = y / &2 * z / b ==> x = y / &2 * z`) THEN
      EXISTS_TAC `(B:num->real) i` THEN ASM_REWRITE_TAC[REAL_ARITH
       `(x + h) / &2 * (x - h) = (x pow 2 - h pow 2) / &2`] THEN
      ASM_SIMP_TAC[SQRT_POW_2; REAL_LT_IMP_LE] THEN
      ASM_SIMP_TAC[REAL_EQ_LDIV_EQ] THEN
      SPEC_TAC(`i:num`,`j:num`) THEN
      MATCH_MP_TAC num_INDUCTION THEN CONJ_TAC THENL
       [ASM_REWRITE_TAC[] THEN REAL_ARITH_TAC; REWRITE_TAC[ADD1]] THEN
      ONCE_REWRITE_TAC[ASSUME
      `!i. R (i + 1) =
           beta (i + 1) * R i - v (i + 1) * (H (i + 1) + H i) / &2`] THEN
      X_GEN_TAC `j:num` THEN SIMP_TAC[] THEN
      REWRITE_TAC[ASSUME
       `!i. H (i + 1):real = H i + v (i + 1) / B (i + 1)`] THEN
      REWRITE_TAC[ASSUME `!i. B (i + 1):real = beta (i + 1) * B i`] THEN
      SUBGOAL_THEN `&0 < beta(j + 1) /\ &0 < B j` MP_TAC THENL
       [ASM_REWRITE_TAC[]; CONV_TAC REAL_FIELD];
      FIRST_X_ASSUM MATCH_ACCEPT_TAC;
      FIRST_X_ASSUM MATCH_ACCEPT_TAC;
      FIRST_X_ASSUM MATCH_ACCEPT_TAC;
      FIRST_X_ASSUM MATCH_ACCEPT_TAC;
      MAP_EVERY X_GEN_TAC [`i:num`; `p:real->real`] THEN DISCH_TAC THEN
      EXPAND_TAC "PSI" THEN REWRITE_TAC[] THEN
      ASM_CASES_TAC `i = 0` THEN ASM_SIMP_TAC[POSYNOMIAL_CONST] THEN
      MATCH_MP_TAC POSYNOMIAL_ADD THEN
      ASM_SIMP_TAC[POSYNOMIAL_CONST] THEN
      MATCH_MP_TAC POSYNOMIAL_MUL THEN
      ASM_SIMP_TAC[POSYNOMIAL_CONST; REAL_ARITH
       `&0 <= s ==> &0 <= &1 + s`] THEN
      REWRITE_TAC[real_div; REAL_INV_MUL] THEN REWRITE_TAC[ REAL_ARITH
       `x * inv(&2) * inv y * z = (inv(&2) * z) * x / y`] THEN
      MATCH_MP_TAC POSYNOMIAL_CMUL THEN
      ASM_SIMP_TAC[REAL_LT_INV_EQ; REAL_ARITH
       `&0 < inv(&2) * x <=> &0 < x`] THEN
      MATCH_MP_TAC POSYNOMIAL_VDIV THEN ASM_REWRITE_TAC[]];
\end{verbatim}\end{scriptsize}

The use of the result is very similar, however, and this quickly concludes
the proof:

\begin{scriptsize}\begin{verbatim}
    STRIP_TAC THEN EXISTS_TAC `tau:num->real->real` THEN
    ASM_REWRITE_TAC[REAL_ADD_SUB] THEN CONJ_TAC THENL
     [EXPAND_TAC "PSI" THEN REWRITE_TAC[]; ALL_TAC] THEN
    ASM_SIMP_TAC[REAL_ABS_DIV; REAL_LE_DIV2_EQ;
                 REAL_ARITH `&0 < b ==> abs b = b`] THEN
    X_GEN_TAC `i:num` THEN
    FIRST_X_ASSUM(MP_TAC o SPECL [`&1 / &2`; `&1`]) THEN
    REWRITE_TAC[SUBSET; IN_REAL_INTERVAL; IN_ELIM_THM] THEN
    ANTS_TAC THENL [REAL_ARITH_TAC; ALL_TAC] THEN
    DISCH_THEN(MP_TAC o SPECL [`i:num`; `sqrt X`]) THEN
    ANTS_TAC THENL [ALL_TAC; ASM_MESON_TAC[REAL_LE_TRANS]] THEN
    CONJ_TAC THENL
     [MATCH_MP_TAC REAL_LE_RSQRT; MATCH_MP_TAC REAL_LE_LSQRT] THEN
    ASM_REAL_ARITH_TAC]);;
\end{verbatim}\end{scriptsize}

\subsection{Automated instantiation (related to Table~\ref{tab:dsm_div_sqrt})}

For convenience, we have implemented a HOL Light derived rule to instantiate
the parameters of the theorems for division and square root and derive
appropriately accurate error bounds for the successive approximations. A HOL
Light derived rule is essentially a programmatic combination of more basic
rules of inference, which is still doing full logical proof behind the scenes.
Thus we can consider this as analogous to a spreadsheet producing results
automatically as parameters are varied, but with the additional security of
{\em proving} the result. We will not discuss the coding in detail, but it is
very standard for such applications and can be understood by manually tracing
through specific examples.

\begin{scriptsize}\begin{verbatim}
let BOUNDS_INSTATIATION =
  let pth = prove
     (`x <= a / b ==> &0 <= b ==> !a'. a <= a' ==> x <= a' / b`,
      REPEAT STRIP_TAC THEN TRANS_TAC REAL_LE_TRANS `a / b:real` THEN
      ASM_REWRITE_TAC[] THEN REWRITE_TAC[real_div] THEN
      MATCH_MP_TAC REAL_LE_RMUL THEN ASM_REWRITE_TAC[REAL_LE_INV_EQ]) in
  let rec calc rews (thb,ths) n =
    if n = 0 then [thb] else
    let oths = calc rews (thb,ths) (n - 1) in
    let th1 = CONV_RULE NUM_REDUCE_CONV (SPEC(mk_small_numeral(n - 1)) ths) in
    let th2 = GEN_REWRITE_RULE TOP_DEPTH_CONV (hd oths::rews) th1 in
    let th3 = CONV_RULE REAL_RAT_REDUCE_CONV th2 in
    th3::oths in
  fun th beta sigma omega n d ->
    let ith = BETA_RULE (SPECL [beta; sigma; omega] th) in
    let avs,itm = strip_forall(concl ith) in
    let hth = ASSUME (rand(lhand itm)) in
    let eth = MP (SPECL avs ith) (CONJ (REAL_ARITH(lhand(lhand itm))) hth) in
    let ev,ebod = dest_exists(concl eth) in
    let [th0;th1;bth] = CONJUNCTS(ASSUME ebod) in
    let (th_b,th_s) =
      let hths = CONJUNCTS hth in
      el (if th = BOUND_THEOREM_DIV then 6 else 4) hths,
      el (if th = BOUND_THEOREM_DIV then 11 else 9) hths in
    let bths = calc [] (th_b,th_s) n in
    let tths_lo =
      calc bths (SPEC (if th = BOUND_THEOREM_DIV then `&1 / &4` else `&1 / &2`)
                 th0,
            SPEC (if th = BOUND_THEOREM_DIV then `&1 / &4` else `&1 / &2`)
                 (GEN_REWRITE_RULE I [SWAP_FORALL_THM] th1)) n
    and tths_hi =
      calc bths (SPEC `&1:real` th0,
            SPEC `&1:real` (GEN_REWRITE_RULE I [SWAP_FORALL_THM] th1)) n in
    let aths = map
     (CONV_RULE REAL_RAT_REDUCE_CONV o
      REWRITE_RULE(tths_lo@tths_hi) o
      C SPEC bth o mk_small_numeral) (0--n) in
    let weaken th =
      let th1 = MATCH_MP pth th in
      let th2 = GEN_REWRITE_CONV RAND_CONV bths (lhand(concl th1)) in
      let th3 = CONV_RULE(RAND_CONV REAL_RAT_REDUCE_CONV) th2 in
      let th4 = MP th1 (EQT_ELIM th3) in
      let rr = rat_of_term(lhand(lhand(snd(dest_forall(concl th4))))) in
      let yy = pow10 d in
      let xx = ceiling_num(yy */ rr) in
      let th5 = SPECL [mk_numeral xx; mk_numeral yy] DECIMAL in
      let th6 = SPEC (lhand(concl th5)) th4 in
      MP th6 (EQT_ELIM(REAL_RAT_REDUCE_CONV(lhand(concl th6)))) in
    let ath = end_itlist CONJ (map weaken aths) in
    GENL avs (DISCH_ALL (CHOOSE(ev,eth) ath));;
\end{verbatim}\end{scriptsize}

The toplevel function takes a number of parameters

\begin{itemize}

\item {\tt th} is the bounds theorem to instantiate, which will be
{\verb|BOUND_THEOREM_DIV|} or {\verb|BOUND_THEOREM_SQRT|}.

\item {\tt beta}, {\tt sigma} and {\tt omega} are HOL term instantiations for
the particular values of $\beta$, $\Sigma$ and $\Omega$.

\item {\tt n} is the number of iterations for which bounds are desired: an
input of $n$ will result in bounds for $H_0,H_1,\ldots,H_n$.

\item {\tt d} is the number of fractional digits in the decimal representation
of the digit bounds.

\end{itemize}

For example the instantiation:

\begin{scriptsize}\begin{verbatim}
BOUNDS_INSTATIATION BOUND_THEOREM_SQRT
 `(\i. if i = 2 then &32 else if i = 5 then &64 else &128):num->real`
 `inv(&2 pow 8):real`
 `(\i. if i = 0 then &1 / &2 else &9 / &16):num->real`
 7 6;;
\end{verbatim}\end{scriptsize}

\noindent results automatically in the following theorem giving bounds to 6
places after the decimal point for the iterations $H_0,\ldots,H_7$ for the
square root algorithm with (somewhat arbitrary) parameters:

\begin{scriptsize}\begin{verbatim}
  |- !B DSF H R Tp X g sigma v.
         &1 / &4 <= X /\
         X < &1 /\
         (!x. &1 / &4 <= x /\ x < &1
              ==> g x = (&1 + sigma x) / sqrt x /\
                  abs (sigma x) <= inv (&2 pow 8)) /\
         (!i. i >= 1
              ==> (!x. abs (x - DSF i x) <=
                       (if i = 0 then &1 / &2 else &9 / &16))) /\
         B 0 = &1 /\
         H 0 = &0 /\
         R 0 = X / &2 /\
         (!i. Tp i = (if i = 0 then &2 else &1) * g X * R i) /\
         (!i. v (i + 1) =
              DSF (i + 1)
              ((if i + 1 = 2 then &32 else if i + 1 = 5 then &64 else &128) *
               Tp i)) /\
         (!i. B (i + 1) =
              (if i + 1 = 2 then &32 else if i + 1 = 5 then &64 else &128) *
              B i) /\
         (!i. H (i + 1) = H i + v (i + 1) / B (i + 1)) /\
         (!i. R (i + 1) =
              (if i + 1 = 2 then &32 else if i + 1 = 5 then &64 else &128) *
              R i -
              v (i + 1) * (H (i + 1) + H i) / &2)
         ==> abs (sqrt X - H 0) <= #1.000000 / B 0 /\
             abs (sqrt X - H 1) <= #1.062500 / B 1 /\
             abs (sqrt X - H 2) <= #0.836978 / B 2 /\
             abs (sqrt X - H 3) <= #0.998973 / B 3 /\
             abs (sqrt X - H 4) <= #1.062231 / B 4 /\
             abs (sqrt X - H 5) <= #0.828059 / B 5 /\
             abs (sqrt X - H 6) <= #0.976530 / B 6 /\
             abs (sqrt X - H 7) <= #1.050765 / B 7
\end{verbatim}\end{scriptsize}

Using similar simple invocations we can exactly check the main bounds given in
Table 1. Where they differ in the last digit, the difference arises because our
theorems are returning actual bounds whereas the table just rounds the bounds
to nearest.

\end{document}